\documentclass[11pt]{amsart}

\usepackage[a4paper,margin=1.1in]{geometry}
\usepackage[T1]{fontenc}
\usepackage{lmodern}
\usepackage{microtype}
\usepackage{amsmath,amssymb,mathtools}
\usepackage{enumitem}
\usepackage{xurl}
\usepackage{xcolor}
\usepackage{tikz}
\usetikzlibrary{arrows.meta,calc,positioning}
\usepackage[colorlinks=true,linkcolor=blue!45!black,citecolor=blue!45!black,urlcolor=blue!45!black]{hyperref}

\usepackage[all]{xy}
\usepackage{relsize}
\usepackage[bbgreekl]{mathbbol}

\DeclareSymbolFontAlphabet{\mathbb}{AMSb}
\DeclareSymbolFontAlphabet{\mathbbl}{bbold}

\numberwithin{equation}{section}

\theoremstyle{plain}
\newtheorem{thm}{Theorem}[section]
\newtheorem{prop}[thm]{Proposition}
\newtheorem{lem}[thm]{Lemma}
\newtheorem{cor}[thm]{Corollary}

\theoremstyle{definition}
\newtheorem{defn}[thm]{Definition}

\newtheorem{construction}[thm]{Construction}

\theoremstyle{remark}
\newtheorem{rem}[thm]{Remark}
\newtheorem{ex}[thm]{Example}

\newcommand{\Prism}{\mathbbl{\Delta}}
\newcommand{\crys}{\mathrm{crys}}
\newcommand{\AlgK}{\mathrm{K}}
\newcommand{\pd}{\mathrm{pd}}
\newcommand{\Vect}{\mathrm{Vect}}

\newcommand{\Z}{\mathbb{Z}}

\newcommand{\Q}{\mathbb{Q}}

\newcommand{\F}{\mathbb{F}}

\newcommand{\TC}{\mathrm{TC}}

\newcommand{\Shv}{\mathrm{Shv}}

\newcommand{\op}{\mathrm{op}}

\newcommand{\frakS}{\mathfrak{S}}

\newcommand{\calO}{\mathcal{O}}

\newcommand{\tilp}{\widetilde{p}}
\newcommand{\qdR}{\mathrm{qdR}}
\newcommand{\pdR}{\widetilde{p}\mathrm{dR}}

\newcommand{\rmD}{\mathrm{D}}
\newcommand{\hatD}{\widehat{\mathrm{D}}}

\DeclareMathOperator{\Spec}{Spec}

\title[A Note on Prismatic Sites for \texorpdfstring{$p$}{p}-Quasisyntomic Rings]
{A Note on Prismatic Sites for \texorpdfstring{$p$}{p}-Quasisyntomic Rings}

\begin{document}

\author{Jingbang Guo} 
\thanks{The author would thank the support from Guozhen Wang and the funding NSFC-12226002.}

\begin{abstract}
    Let $p$ be a fixed prime number and let $R$ be a $p$-quasisyntomic ring. In this note, we provide conditions for objects in the absolute prismatic site $R_\Prism$ to cover the final object in $\Shv(R_\Prism)$. More precisely, we introduce in $R_\Prism$ the so-called transversal objects, with which coproducts exist in $R_\Prism$. Immediately generalizing this, we introduce the so-called relatively quasiregular semiperfectoid covers of $R$, whose prismatic cohomology (of $\delta$-pairs, in the sense of Antieau-Krause-Nikolaus) would produce in $R_\Prism$ objects with which coproducts exist. 
\end{abstract}

\maketitle

{\setcounter{tocdepth}{1}\tableofcontents}

\section{Introduction}\label{section:introduction}

Let $p$ be a fixed prime number and let $R$ be a $p$-quasisyntomic ring. In this note, we provide conditions for objects in the absolute prismatic site $R_\Prism$ to cover the final object.

\subsection{Main Results}\label{subsection:main-results}

In Section \ref{section:transversal-objects-and-coproducts-in-absolute-prismatic-sites}, we explain the following result.
\begin{thm}[Corollary {\ref{cor:coproducts-in-RPrism}}]\label{thm:main-result-transversal-cover}
    Let $R$ be a $p$-quasisyntomic ring, and suppose that $(A,I,u)\in R_\Prism$ is a transversal object, in the sense that the following conditions are satsified:
    \begin{itemize}
        \item[(1)] The ring $A$ is $p$-torsion-free. 

        \item[(2)] The map $u:R\rightarrow A/I$ is $p$-completely faithfully flat. 

        \item[(3)] The cotangent complex $L\Omega^1_{(A/I)/(A\otimes R)}\in \rmD(A/I)$ has $p$-complete Tor-amplitude concentrated in homological degree $[1,1]$, where $A/I$ is regarded as an $A\otimes R$-algebra through $A\rightarrow A/I$, $u:R\rightarrow A/I$ and the universal property of tensor product. 
    \end{itemize}
    Then $(A,I,u)$ covers the final object in $\Shv(R_\Prism)$ and admits finite self-coproducts in $R_\Prism$. As a consequence, we have an equivalence of categories 
    $$
    \hatD_\crys(R_\Prism,\calO_\Prism)\xrightarrow{\sim}\varprojlim_{\bullet\in \Delta}\hatD(A^\bullet),
    $$
    where $\hatD_\crys(R_\Prism,\calO_\Prism)$ is the derived category of prismatic crystals over $R$ and $A^\bullet$ is the cosimplicial object obtained by taking self-coproducts of $(A,I,u)\in R_\Prism$.
\end{thm}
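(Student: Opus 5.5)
The plan has three parts, in this order: show that $(A,I,u)$ covers the final object, construct the self-coproducts as prismatic envelopes, and then deduce the crystal equivalence by descent.

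\textbf{Covering.} Take any $(B,J,v)\in R_\Prism$. We have to produce a flat cover $(B,J,v)\to(C,JC,w)$ together with a map $(A,I,u)\to(C,JC,w)$. Form the $p$-completed tensor product $D=A\widehat{\otimes}B$, a $\delta$-ring. Over it put the $R$-algebra
$$
\overline{D}=(A/I)\widehat{\otimes}_{R}(B/J),
$$
where the tensor is taken along $u$ and $v$. Let $C$ be the prismatic envelope of $D\to\overline{D}$ relative to the prism $(B,J)$.

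Condition (2) makes $B/J\to\overline{D}$ $p$-completely faithfully flat. The surjection $A\otimes R\to A/I$ has cotangent complex of Tor-amplitude $[1,1]$ by condition (3), so it is a $p$-completely quasiregular surjection. Combined with condition (1), this puts us in the regular-sequence/quasiregular setting of the prismatic envelope results of Bhatt--Scholze (Prop.\ 3.13) and of the previously established lemmas on envelopes. Those give that $C$ is $(p,J)$-completely flat over $B$, and faithfully flat because (2) supplies surjectivity on points.

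\textbf{Coproducts.} The same construction with $B=A$ gives the coproduct $A\sqcup A$: it is the prismatic envelope of $A\widehat{\otimes}A$ along the kernel to $(A/I)\widehat{\otimes}_R(A/I)$. The universal property follows from the universal property of prismatic envelopes together with the fact that $\delta$-maps out of a $p$-completed tensor product are pairs of maps. Iterating gives $A^{\bullet}$, and flatness shows each term is a prism, so it lies in $R_\Prism$.

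\textbf{Crystals.} Since $(A,I,u)$ covers the final object and its self-coproducts exist, the Čech nerve computes the final object in the topos. The value of the sheaf of categories $\hatD_\crys$ on $(A^n,IA^n)$ is $\hatD(A^n)$, because crystals on a slice with a final object are determined by their value there. Flat descent for $(p,I)$-complete derived categories then gives
$$
\hatD_\crys(R_\Prism,\calO_\Prism)\simeq\varprojlim_{\bullet\in\Delta}\hatD(A^\bullet).
$$

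\textbf{Main obstacle.} The hard step is the flatness of the envelope. We need to convert the Tor-amplitude condition (3), together with the $p$-torsion-freeness in (1), into the input required by the envelope flatness theorem: a regular sequence relative to $B$ after base change. I will first reduce to $B$ being a perfect prism or a quasiregular semiperfectoid prism, where the envelope can be computed explicitly. I will then deduce the general case by flat descent, using that faithful flatness can be checked after a flat cover of $B$.
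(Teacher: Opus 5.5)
Your architecture is the paper's. The coproduct $(A,I,u)\sqcup(B,J,v)$ is the envelope $C$ of $D=A\widehat{\otimes}B\to\overline{D}=\overline{A}\widehat{\otimes}_R\overline{B}$ relative to $(B,J)$. Covering comes from $(p,J)$-complete faithful flatness of $B\to C$. The crystal statement is \v{C}ech descent as in Lemma~\ref{lem:cover-the-final-object}.

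\textbf{The gap.} It is the step you yourself flag as the main obstacle, and your plan does not close it.
\begin{itemize}
\item Bhatt--Scholze's Proposition 3.13 needs the ideal to be $I$ plus a finite $(p,I)$-completely regular sequence relative to the base. Condition (3) gives no such sequence. It only makes $D/JD=A\widehat{\otimes}\overline{B}\to\overline{D}$ a $p$-completely quasiregular surjection: its cotangent complex is $L\Omega^1_{\overline{A}/(A\otimes R)}\widehat{\otimes}^L_R\overline{B}$, of Tor-amplitude $[1,1]$ by (2) and (3).
\item Replacing $B$ by a perfect or qrsp prism does not by itself make this ideal regular, since it is base-changed from $A\otimes R\to\overline{A}$. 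So there is no ``explicit computation'' available there either without the input you are missing.
\item The reduction is also unavailable as stated. If $J=(p)$, a $p$-completely faithfully flat map to a perfect prism makes $B/p$ inject into a perfect ring, so $B/p$ must be reduced. This fails for PD-envelope-type prisms, such as the self-coproducts of $(\Z_p[x]^\wedge_p,(p))$ in $(\F_p[x])_\Prism$.
\item Descending back would also require flat base change for the envelopes in $B$, which you do not address.
\item Condition (2) alone does not give faithful flatness of $B\to C$. You also need $\overline{D}\to C/JC$ to be faithfully flat.
\end{itemize}

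\textbf{The missing ingredient.} It is the quasiregular statement, Bhatt--Scholze's Proposition 7.10 (Theorem~\ref{thm:prismatic-cohomology-of-prismatic-relative-quasiregular-semiperfectoids} here), which the paper applies through Lemma~\ref{lem:coproducts-in-RPrism}.
\begin{itemize}
\item By (1), $(D,JD)$ with $D=(A\otimes B)^\wedge_{(p,J)}$ is a bounded prism, flat over $(B,J)$. Moreover $D/JD\to\overline{D}$ is surjective with the cotangent complex above.
\item The theorem then applies directly for every bounded $(B,J)$. It shows that $C=\Prism_{\overline{D}/D}$ is discrete and initial in $(\overline{D}/D)_\Prism$, so it is your envelope and the coproduct.
\item Via the Hodge--Tate comparison, the same theorem gives that $\overline{D}\to C/JC$ is $p$-completely faithfully flat.
\item Composing with $B/J\to\overline{D}$, which is faithfully flat by (2), shows $B/J\to C/JC$ is $p$-completely faithfully flat. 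Since $J$ is Cartier, $B\to C$ is then $(p,J)$-completely faithfully flat.
\end{itemize}
No reduction to special $B$ and no descent in $B$ is needed.
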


In fact, we provide an immediate generalization of this result in Section \ref{section:relatively-qrsp-covers}. 

\begin{thm}[Corollary \ref{cor:coproducts-with-relatively-qrsp-covers}]\label{thm:main-results-relatively-qrsp-cover}
    Let $R$ be a $p$-quasisyntomic ring, and let $(A',I')$ be a bounded prism, $R'$ be an $R$-algebra with $u':R\rightarrow R'$, and let $v':A'/I'\rightarrow R'$ be a ring homomorphism. Suppose that $(A',R',u')$ is a relatively quasiregular semiperfectoid cover of $R$, in the sense that the following conditions are satisfied:
    \begin{itemize}
        \item[(1)] The map $v':A'/I'\rightarrow R'$ is surjective. 

        \item[(2)] The cotangent complex $L\Omega^1_{R'/(A'/I')}\in \rmD(R')$ has $p$-complete Tor-amplitude concentrated in homological degree $[1,1]$. 

        \item[(3)] The map $v':R\rightarrow R'$ is $p$-completely faithfully flat. 

        \item[(4)] The ring $A'$ is $p$-torsion-free, and the cotangent complex $L\Omega^1_{R'/(A'\otimes R)}\in \rmD(R')$ has $p$-complete Tor-amplitude concentrated in homological degree $[1,1]$ 
    \end{itemize}
    Then $(\Prism_{R'/A'},I'\Prism_{R'/A'})$, naturally regarded as an object in $R_\Prism$, covers the final object and admits finite self-coproducts. 
\end{thm}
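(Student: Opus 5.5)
Here is the plan: reduce Theorem \ref{thm:main-results-relatively-qrsp-cover} to Theorem \ref{thm:main-result-transversal-cover}. Concretely, I would show that $(A,I,u):=(\Prism_{R'/A'},I'\Prism_{R'/A'},u)$ is a transversal object of $R_\Prism$, with structure map $u$ the composite $R\xrightarrow{u'}R'\to \Prism_{R'/A'}/I'\Prism_{R'/A'}$. Three preliminary facts make this meaningful.

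\begin{itemize}
\item By (1) and (2), $R'$ is relatively quasiregular semiperfectoid over $A'/I'$ (in the sense of Antieau--Krause--Nikolaus, $\delta$-pairs). Hence $\Prism_{R'/A'}$ is a discrete, $(p,I')$-completely flat $A'$-algebra.
\item $(\Prism_{R'/A'},I'\Prism_{R'/A'})$ is a bounded prism, obtained as the prismatic envelope of $A'\{x_j\}\to R'$, and it is the initial object of $(R'/A')_\Prism$.
\item The Hodge--Tate comparison identifies $\overline{\Prism}_{R'/A'}$ with a $p$-completely flat $R'$-algebra; in degree $0$ it is the conjugate-filtration piece, which is flat because the graded pieces $\wedge^i L_{R'/(A'/I')}[-i]$ are flat.
\end{itemize}

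Then I would check the three transversality conditions in order.

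For condition (1), $A'$ is $p$-torsion-free and $\Prism_{R'/A'}$ is $(p,I')$-completely flat over $A'$. A $p$-completely flat module over a $p$-torsion-free ring that is derived $p$-complete is $p$-torsion-free, so $\Prism_{R'/A'}$ is $p$-torsion-free.

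For condition (2), compose $R\to R'$, which is $p$-completely faithfully flat by (3), with $R'\to\overline{\Prism}_{R'/A'}$. The second map is $p$-completely faithfully flat because its conjugate filtration has $R'$ as a direct summand on the graded level. The composite is therefore $p$-completely faithfully flat.

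Condition (3) is the main obstacle: computing $L_{\overline{\Prism}/(\Prism\otimes R)}$, where $\Prism=\Prism_{R'/A'}$. I would use the transitivity triangle for
$$\Prism\otimes R\to \Prism\otimes_{A'}(A'\otimes R)\to \overline{\Prism}.$$
More precisely, I would factor it as the base change of $A'\otimes R\to R'$ along $A'\to\Prism$, followed by $\Prism\otimes_{A'}R'\to\overline{\Prism}$.

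\begin{itemize}
\item By (4), $L_{R'/(A'\otimes R)}$ is $p$-completely concentrated in degree $[1,1]$. After the flat base change along $A'\to\Prism$, and then base change to $\overline{\Prism}$, it remains so.
\item For the second piece, $\Prism\otimes_{A'}R'\to\overline{\Prism}$ should be identified, via the pushout square $A'\to A'/I'\to R'$, with a quotient by the regular element generating $I'$ together with the relative Hodge--Tate data. Its cotangent complex then has Tor-amplitude in $[1,1]$.
\end{itemize}

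I expect this identification to require the most care. Concretely, I would present $R'=A'/I'[x_j]^\wedge/(f_k)$ and the envelope $\Prism=A'\{x_j,f_k/d\}^\wedge$, then compute directly that $\overline{\Prism}$ over $\Prism\otimes R$ is a quasiregular quotient of a $p$-completely flat algebra. Extensions of complexes with Tor-amplitude in $[1,1]$ keep Tor-amplitude in $[1,1]$, so the triangle gives condition (3).

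Finally, I would apply Theorem \ref{thm:main-result-transversal-cover} (Corollary \ref{cor:coproducts-in-RPrism}) to conclude that the object covers the final object of $\Shv(R_\Prism)$ and admits finite self-coproducts.
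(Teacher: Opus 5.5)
Your reduction to Theorem~\ref{thm:main-result-transversal-cover} cannot work. The object $(\Prism_{R'/A'},I'\Prism_{R'/A'})$ is in general \emph{not} transversal, and the step that fails is your condition (3). Write $\Prism=\Prism_{R'/A'}$.

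With derived tensor products, $\Prism\otimes^L_{A'}R'\simeq\overline{\Prism}\otimes^L_{A'/I'}R'$. The map to $\overline{\Prism}$ is then the base change along $R'\to\overline{\Prism}$ of the multiplication $R'\otimes^L_{A'/I'}R'\to R'$. Its cotangent complex is $L\Omega^1_{R'/(A'/I')}[1]\otimes^L_{R'}\overline{\Prism}$, which by hypothesis (2) has Tor-amplitude $[2,2]$, not $[1,1]$. So the transitivity triangle only gives $[1,2]$.

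With underived tensor products, $\Prism\otimes_{A'}R'=\overline{\Prism}$ and the second map is the identity. But then the first map is not a base change of $A'\otimes R\to R'$, because $A'\to\Prism$ is not $(p,I')$-completely flat: $\Prism/(p,I')$ is a nonzero $R'/p$-algebra, so it is killed by $\ker(A'/I'\to R')$, which typically contains nonzerodivisors of $A'/(p,I')$. This also breaks your argument for $p$-torsion-freeness.

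The degree-$2$ term genuinely survives, as this example shows.
\begin{itemize}
\item Take $R=R'=\Z_p[x]/(x^2)$, $A'=(\Z_p[[z]][x])^\wedge_{(p,z)}$ with $\delta(z)=\delta(x)=0$, and $I'=(z-p)$. Hypotheses (1)--(4) are immediate (cf.\ Lemma~\ref{lem:relatively-qrsp-cover-for-u'-quasietale}(2)).
\item Put $t=x^2/(z-p)\in\Prism$. The triangle $L\Omega^1_R\otimes^L_R\overline{\Prism}\to L\Omega^1_{\overline{\Prism}/\Prism}\simeq\overline{\Prism}[1]\to L\Omega^1_{\overline{\Prism}/(\Prism\otimes R)}$, computed from the presentation $R=\Z_p[x]/(x^2)$, identifies the last term with the complex $\overline{\Prism}\xrightarrow{(-2x,\,\bar t)}\overline{\Prism}^{\oplus 2}$ in homological degrees $2,1$.
\item We have $\phi(z-p)\phi(t)=x^{2p}\in(z-p)\Prism$, and $\phi(z-p)\equiv p\cdot\mathrm{unit}$ modulo $z-p$. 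Since $\overline{\Prism}$ is $p$-torsion-free, this gives $\bar t^{\,p}\in p\overline{\Prism}$.
\item Hence $x$ and $\bar t$ vanish in every residue field $k$ of $\overline{\Prism}/p\neq0$. After base change to $k$ the complex has $H_2=k\neq 0$, so condition (3) of Definition~\ref{defn:transversal-objects} fails.
\end{itemize}

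The missing idea is to build coproducts with $\Prism_{R'/A'}$ directly from the presentation $(A',R')$, as the paper does in Lemma~\ref{lem:coproducts-with-relatively-qrsp-covers} and Construction~\ref{construction:coproducts-with-relatively-qrsp-covers}.
\begin{itemize}
\item For an arbitrary $(B,J,v)\in R_\Prism$, the paper shows that $(A'\widehat{\otimes}^L B,R'\widehat{\otimes}^L_R\overline{B})$ is a relatively quasiregular semiperfectoid $\delta$-pair. It uses $L\Omega^1_{(R'\otimes^L_R\overline{B})/(A'\otimes^L\overline{B})}\simeq L\Omega^1_{R'/(A'\otimes^L R)}\otimes^L_R\overline{B}$.
\item Thus hypothesis (4) is applied to the small $\delta$-ring $A'$, where it holds. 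It is never applied to $\Prism_{R'/A'}$, where its analogue fails.
\item The prismatic cohomology of this pair is the coproduct, by initiality: a morphism out of $\Prism_{R'/A'}$ in $R_\Prism$ amounts to an $(R'/A')_\Prism$-structure on the target compatible with its map from $R$.
\item The map from $B$ to the coproduct is $(p,J)$-completely faithfully flat. Modulo $J$ it factors as $\overline{B}\to R'\widehat{\otimes}^L_R\overline{B}\to\Prism_{(R'\widehat{\otimes}^L_R\overline{B})/(A'\widehat{\otimes}^L B)}/J$. The first map is a base change of hypothesis (3), and the second is the Hodge--Tate map of Theorem~\ref{thm:prismatic-cohomology-of-relative-quasiregular-semiperfectoids}.
\end{itemize}
Covering the final object and admitting finite self-coproducts then follow as in Corollary~\ref{cor:coproducts-in-RPrism}.
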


Although logically Section \ref{section:transversal-objects-and-coproducts-in-absolute-prismatic-sites} can be subsumed under Section \ref{section:relatively-qrsp-covers}, we choose to maintain Section \ref{section:transversal-objects-and-coproducts-in-absolute-prismatic-sites} as a trace of mathematical movement.

\subsection{Main Idea}\label{subsection:main-idea}

The main idea of this note comes from \cite[Proposition 2.4.5]{BL22} and \cite[Proposition 2.14]{BS23}. Generally speaking, for a $p$-quasisyntomic ring $R$ and an object $(A,I,u)\in R_\Prism$, we wonder when the coproduct $(A,I,u)\coprod (B,J,v)$ would exist in $R_\Prism$ for any $(B,J,v)\in R_\Prism$. For some concrete $(A,I,u)$, such a coproduct is usually constructed as a prismatic envelope; see, for example, \cite[Example 2.6 (1)]{BS23}. In the general situation, we construct such a coproduct as the prismatic cohomology of certain relatively quasiregular semiperfectoid $\delta$-pairs: roughly, the underlying $\delta$-ring of $(A,I,u)\coprod (B,J,v)$ should be $\Prism_{(A/I)\widehat{\otimes}^L_R (B/J)/A\otimes^L B}$. We need the following Theorem. 

\begin{thm}[Bhatt-Scholze, {\cite[Proposition 7.10]{BS22}}, Theorem \ref{thm:prismatic-cohomology-of-prismatic-relative-quasiregular-semiperfectoids}]
    Let $(A,I)$ be a bounded prism, and let $R$ be an $A/I$-algebra such that $A/I\rightarrow R$ is surjective and $L\Omega^1_{R/(A/I)}$ has $p$-complete Tor-amplitude concentrated in homological degree $[1,1]$. Then $\Prism_{R/A}$ is concentrated in degree $0$, and the pair $(\Prism_{R/A},I\Prism_{R/A})$ gives an initial object in the category $(R/A)_\Prism$ with a map $\iota:R\rightarrow \Prism_{R/A}/I\Prism_{R/A}$. Moreover, the map $R\rightarrow \Prism_{R/A}/I\Prism_{R/A}$ is $p$-completely faithfully flat.

    If more specifically $R=A/(I,r_1,r_2,\ldots)$ where $r_1,r_2,\ldots\in A/I$ is a regular sequence, then we have 
    $$
    \Prism_{R/A}\simeq A\{\frac{r_1}{I},\frac{r_2}{I},\ldots\}^\wedge_{(p,I)}.
    $$
\end{thm}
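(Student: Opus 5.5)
The plan follows the strategy of \cite[Proposition 7.10]{BS22}, which in turn adapts the computation of \cite[Proposition 2.4.5]{BL22} via prismatic envelopes. There are three parts: the regular-sequence case, the general case by descent along a quasiregular semiperfectoid-type cover, and the universal property.

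\textbf{The regular sequence case.} Suppose first that $R=A/(I,r_1,r_2,\ldots)$, with $r_i$ lifted to $A$ and forming a $(p,I)$-completely regular sequence on $A/I$. Form the prismatic envelope $C:=A\{\frac{r_1}{I},\frac{r_2}{I},\ldots\}^\wedge_{(p,I)}$. The first step is to invoke the flatness result for prismatic envelopes of regular sequences \cite[Proposition 3.13]{BS22}. This shows that $C$ is a bounded prism, $(p,I)$-completely flat over $A$, and initial among prisms $(B,IB)$ over $(A,I)$ in which the images of the $r_i$ lie in $IB$. That universal property is exactly the statement that $(C,IC)$ is initial in $(R/A)_\Prism$. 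Prismatic cohomology is computed by the Čech–Alexander complex of a weakly initial object. Since an initial object has trivial Čech nerve, we get $\Prism_{R/A}\simeq C$ in degree $0$.

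For faithful flatness of $R\to C/IC$, I would use the Hodge–Tate comparison. The conjugate filtration on $\overline{\Prism}_{R/A}$ has graded pieces $\wedge^i L_{R/(A/I)}[-i]$, with the appropriate Breuil–Kisin twists. Here $L_{R/(A/I)}\simeq N[1]$ for a $p$-completely flat $R$-module $N$, namely the conormal module $\bigoplus R\cdot r_i$. So the graded pieces are $\Gamma^i_R(N)$ (twisted), which are $p$-completely flat. An exhaustive increasing filtration with $p$-completely flat graded pieces, starting with $\mathrm{gr}_0=R$, gives that $R\to C/IC$ is $p$-completely faithfully flat.

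\textbf{The general case.} For general $R$ with $A/I\to R$ surjective and $L_{R/(A/I)}$ of $p$-complete Tor-amplitude $[1,1]$, the conjugate-filtration argument applies verbatim. It shows that $\overline{\Prism}_{R/A}$ is concentrated in degree $0$ and $p$-completely faithfully flat over $R$. Since $\Prism_{R/A}$ is derived $(p,I)$-complete, $I$ is locally principal, and $\Prism_{R/A}/I$ is discrete and $I$-torsion-free, it follows by derived Nakayama that $\Prism_{R/A}$ is discrete and $I$-torsion-free. One still needs a $\delta$-structure, boundedness, and the initial property. I would obtain these by quasisyntomic descent. Choose a cover $R\to R_\infty$ of the above form, with $A/I\to R_\infty$ still surjective. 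For instance, take $A_\infty$ a free $\delta$-$A$-algebra on generators killing a chosen set of generators of $\ker(A/I\to R)$. Then locally compare with the regular case and use that the formation of $\Prism_{-/A}$ commutes with base change and satisfies descent. The object is then identified with the derived prismatic cohomology, i.e.\ the left Kan extension from smooth algebras, which carries its $\delta$-structure.

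\textbf{Initiality and the main obstacle.} Initiality comes from the comparison between site-theoretic and derived prismatic cohomology: for any $(B,IB)\in(R/A)_\Prism$, the map $\Prism_{R/A}\to\Prism_{(B/IB)/B}=B$ provides the morphism. Uniqueness follows because $\Prism_{R/A}$ is generated, as a $(p,I)$-complete $\delta$-$A$-algebra, by fractions $f/d$ for $f$ in the kernel. I expect the main obstacle to be the general non-regular-sequence case. There one has to show that the derived (Kan-extended) prismatic cohomology actually is a prism and corepresents the site, and not merely that it is discrete. My first approach would be to reduce to the quasiregular semiperfectoid case via descent, using \cite[Proposition 7.2]{BS22} as the model.
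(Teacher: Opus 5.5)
\textbf{The general case has a genuine gap.} Your discreteness and faithful-flatness step via the conjugate filtration matches the paper. The initiality of $(\Prism_{R/A},I\Prism_{R/A})$, which is the real content, is not established, as you yourself concede.

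The proposed descent cannot work as stated. If $R\to R_\infty$ is $p$-completely faithfully flat and $A/I\to R_\infty$ is still surjective, then $R\to R_\infty$ is a surjective $p$-completely faithfully flat map. It is therefore an isomorphism after derived $p$-completion, so there is nothing to descend along. Your uniqueness argument also presupposes that $\Prism_{R/A}$ is topologically generated as a $\delta$-$A$-algebra by fractions $f/d$, which is essentially what has to be proved.

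No descent is needed for the $\delta$-structure and boundedness. \cite[Lemma 7.7]{BS22} makes $(\Prism_{R/A},I\Prism_{R/A})$ an object of $(R/A)_\Prism$ from what you already have. It also says this object is initial iff a certain idempotent endomorphism $e_R$ is the identity; $e_R$ is obtained by evaluating the natural map $\Prism_{R/A}\to\varprojlim_{(B,J)\in(R/A)_\Prism}B$ at $\Prism_{R/A}$ itself.

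\textbf{The missing idea} is to reduce along a surjection \emph{onto} $R$, not a flat cover \emph{of} $R$:
\begin{enumerate}[label=(\roman*)]
\item Choose generators $f_t$ of $\ker(A/I\to R)$ and let $R'=A/I\langle X_t^{1/p^\infty}\rangle/(X_t-f_t)^\wedge\to R$, $X_t\mapsto 0$. This map is surjective and surjective on conormal modules.
\item By the conjugate filtration, $\Prism_{R'/A}\to\Prism_{R/A}$ is then surjective, and naturality of $e$ reduces $e_R=\mathrm{id}$ to $e_{R'}=\mathrm{id}$.
\item Write $R'$ as a filtered colimit of $P=A/I\langle X_1^{1/p^\infty},\dots,X_r^{1/p^\infty}\rangle/(X_1-f_1,\dots,X_r-f_r)$.
\item Use $\Prism_{P/A}\simeq\Prism_{P/A'}$ for the relatively perfect $A'=A\langle X_1^{1/p^\infty},\dots,X_r^{1/p^\infty}\rangle^\wedge_{(p,I)}$. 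Since $X_i-f_i$ is $(p,I)$-completely regular relative to $A$, one lands in \cite[Example 7.9]{BS22}, where $e=\mathrm{id}$.
\end{enumerate}
This argument is also what would justify your generation-by-$f/d$ claim.

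\textbf{The regular-sequence step has a separate flaw.} \cite[Proposition 3.13]{BS22} requires the sequence to be $(p,I)$-completely regular \emph{relative to} $A$, meaning the quotient must be $(p,I)$-completely flat over $A$. This fails for $B=A$, $J=(I,r_1,r_2,\dots)$. In fact $C$ is not $(p,I)$-completely flat over $A$ in general: $C/(p,I)C$ is killed by $\bar r_1$, so if $\bar r_1$ is a nonzerodivisor on $A/(p,I)$, flatness would force $C/(p,I)C=0$ and hence $C=0$.

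You then apply the Hodge--Tate comparison to $C$, but that comparison concerns derived prismatic cohomology. Your \v{C}ech--Alexander argument only identifies $C$ with the site-theoretic limit, and comparing the two is part of what has to be proved. Proposition 3.13 should be applied only in the relative situation of $P$ over $A'$ above.
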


We learn the notion of relatively quasiregular semiperfectoids from \cite{AKN23}, and we also speak of the prismatic cohomology relative to $\delta$-rings developed in \cite{AKN23} for convenience of exposition, although what we actually need is just the aforemention theorem. Finally, this note can serve as a site-theoretical remark on some results in \cite{AKN23}, see, in particular, \cite[Lemma 4.20]{AKN23} with \cite[Theorem 7.17]{BL22schemes}.

\subsection{Motivation}\label{subsection:motivation}

This note arises from an attempt to compute the prismatic cohomology (as well as the syntomic cohomology) by choosing suitable covers and dealing with the corresponding descent. There have been many great works related to this line of thought. For example, in \cite{LW22} there is a computation of $\TC(\calO_K;\F_p)$ using essentially Breuil-Kisin prisms in the form of relative topological Hochschild homology; in \cite{AKN24} there is a delicate algorithm that computes the syntomic cohomology of $\Z/p^n$ (in order to compute $\AlgK(\Z/p^n)$) by descent; in \cite{BS23}, \cite{GLQ25}, and \cite{GMW23}, etc, there are studies of prismatic $F$-crystals and Hodge-Tate crystals by descent technique. In future work, we would use Examples \ref{ex:qdR-polynomial-cover} and \ref{ex:complete-intersection-Zpzetap} (and their variants) to compute the prismatic cohomology (as well as the syntomic cohomology) and study the corresponding crystals; similar considerations can be found in \cite{ZeyuLiu25}.

\subsection*{Acknowledgements}
The author is very grateful to Guozhen Wang for suggesting this investigation. The author also thanks Hui Gao, Heng Du, Yu Min, Yupeng Wang, and Wei Yang for valuable discussions.

\section{A Reminder on Absolute Prismatic Sites}\label{section:a-reminder-on-absolute-prismatic-sites}

Let $p$ be a fixed prime number. In this section, we recall the standard definitions concerning absolute prismatic sites for $p$-quasisyntomic rings. For more details on prismatic cohomology, refer to \cite{BS22} and \cite{BL22}. 

\begin{defn}[$p$-Quasisyntomic Rings, {\cite[Definition C.6]{BL22}}]\label{defn:quasisyntomic-rings}
    We say that a commutative ring $R$ is {\it $p$-quasisyntomic} if it satisfies the following conditions:
    \begin{itemize}
        \item[(1)] The commutative ring $R$ has bounded $p$-power torsion.

        \item[(2)] The object $L\Omega^1_{R}\in \rmD(R)$ has $p$-complete Tor-amplitude concentrated in homological degree $[0,1]$.
    \end{itemize}

    %A $p$-quasisyntomic ring $R$ is called {\it $p$-quasismooth} if it is $p$-torsion-free and $L\Omega^1_R\in \rmD(R)$ has $p$-complete Tor-amplitude concentrated in degree $[0,0]$.

    %A $p$-quasisyntomic ring $R$ is called {\it $p$-quasi-\'etale} if it is $p$-torsion-free and $L\Omega^1_R\otimes^L_R(R/pR)\simeq 0$.

    %A ring homomorphism $f:R\rightarrow S$ is called {\it $p$-quasisyntomic} if it is $p$-completely flat and $L\Omega^1_{S/R}\in\rmD(S)$ has $p$-complete Tor-amplitude concentrated in homological degree $[0,1]$. A $p$-quasisyntomic homomorphism is called a {\it $p$-quasisyntomic cover} if it is $p$-completely faithfully flat.
\end{defn}

\begin{ex}\label{ex:quasisyntomic-rings}
    The following commutative rings are $p$-quasisyntomic:
    \begin{itemize}
        \item[(1)] Let $k$ be a perfect ring of characteristic $p$, then $W(k)$ is $p$-quasisyntomic. 

        \item[(2)] A polynomial ring $W(k)[x_1,\ldots,x_n]$ is $p$-quasisyntomic. 

        \item[(3)] Let $f_1,\ldots,f_r\in W(k)[x_1,\ldots,x_n]$ be a regular sequence, then $W(k)[x_1,\ldots,x_n]/(f_1,\ldots,f_r)$ is $p$-quasisyntomic. 

        \item[(4)] Let $K$ be a finite extension of $\Q_p$, then the ring of integers $\calO_K$ is $p$-quasisyntomic. 
    \end{itemize}
\end{ex}

In order to introduce the prismatic sites, let us first recall the notion of prisms.

\begin{defn}[Prisms, {\cite[Definition 3.2]{BS22}}]\label{defn:prisms}
    A {\it prism} is a pair $(A,I)$, where $A$ is a $\delta$-ring and $I\subset A$ an ideal, such that the following conditions are satisfied:
    \begin{itemize}
        \item[(1)] $I\subset A$ defines a Cartier divisor on $\Spec(A)$. 

        \item[(2)] $A$ is derived $(p,I)$-complete.

        \item[(3)] $p\in I+\phi(I)A$, where $\phi$ is the Frobenius associated to the $\delta$-ring $A$. 
    \end{itemize}

    A prism $(A,I)$ is called {\it bounded} if $A/I$ has bounded $p$-power torsion. A map $(A,I)\rightarrow (B,J)$ of prisms is {\it (faithfully) flat} if the map $A\rightarrow B$ is $(p,I)$-completely (faithfully) flat.
\end{defn}

We now introduce the absolute prismatic site for a $p$-quasisyntomic $R$. In fact, the absolute prismatic site can be defined for any bounded $p$-adic formal scheme. However, only for $p$-quasisyntomic $p$-adic formal schemes, the derived prismatic cohomology coincides with the site-theorectic prismatic cohomology, see for example \cite[Theorem 4.4.30]{BL22}. 

\begin{defn}[Absolute Prismatic Sites, {\cite[Definition 4.4.27]{BL22}}]\label{defn:absolute-prismatic-sites}
    Let $R$ be a $p$-quasisyntomic ring. We define a category $R_\Prism$ as follows:
    \begin{itemize}
        \item The objects of $R_\Prism$ are triples $(A,I,u)$, where $(A,I)$ is a bounded prism and $u:R\rightarrow A/I$ is a ring homomorphism. 

        \item A morphism from $(A,I,u)$ to $(B,J,v)$ in $R_\Prism$ is a morphism of prisms $(A,I)\rightarrow (B,J)$ for which the composite map 
        $$
        R\xrightarrow{u}A/I\rightarrow B/J
        $$
        is equal to $v$.
    \end{itemize}
    We refer to $R_\Prism$ as the {\it absolute prismatic site of $R$}. We define the {\it flat topology} on $(R_\Prism)^\op$ to be the Grothendieck topology generated by those finite collections of morphisms 
    $$
    \{(A,I,u)\rightarrow (A_s,I_s,u_s)\}_{s\in S}
    $$
    for which the induced ring homomorphism $A\rightarrow \prod_{s\in S}A_s$ is $(p,I)$-completely faithfully flat. Finally, the {\it absolute prismatic cohomoloyg of $R$} is defined as 
    $$
    \Prism_R:=\varprojlim_{(A,I,u)\in R_\Prism}A.
    $$
\end{defn}

There is the notion of absolute prismatic crystals on $R$:

\begin{defn}[Absolute Prismatic Crystals, {\cite[Proposition 8.15]{BL22schemes}}]\label{defn:absolute-prismatic-crystals}
    Let $R$ be a $p$-quasisyntomic ring. We define the category $\hatD_\crys(R_\Prism,\calO_\Prism)$ to be the following inverse limit:
    $$
    \hatD_\crys(R_\Prism,\calO_\Prism):=\varprojlim_{(A,I,u)\in R_\Prism}\hatD(A),
    $$
    where $\hatD(A)$ denotes the full subcategory of $\rmD(A)$ spanned by those complexes which are $(p,I)$-complete. We refer to $\hatD_\crys(R_\Prism,\calO_\Prism)$ as the {\it category of absolute prismatic crystals on $R$}. 

    Similarly, we define the category $\Vect(R_\Prism,\calO_\Prism)$ to be the following inverse limit 
    $$
    \Vect(R_\Prism,\calO_\Prism):=\varprojlim_{(A,I,u)\in R_\Prism} \Vect(A),
    $$
    where $\Vect(A)$ is the category of finite projective $A$-modules. We refer to $\Vect(R_\Prism,\calO_\Prism)$ as the {\it category of absolute prismatic vector bundles on $R$}. 
\end{defn}

We note that, $\hatD(-)$ and $\Vect(-)$ satisfy faithfully flat descent.

\begin{prop}\label{prop:faithfully-flat-descent}
    Let $R$ be a $p$-quasisyntomic ring. Then the assignments 
    $$
    (A,I,u)\in R_\Prism \mapsto \hatD(A)\quad \quad \text{and}\quad \quad  (A,I,u)\in R_\Prism\mapsto \Vect(A)
    $$
    are sheaves for the flat topology on $(R_\Prism)^\op$.
\end{prop}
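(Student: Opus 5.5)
The plan is to reduce the statement to the known descent results for a single prism, namely \cite[Proposition 2.7]{BS23} (or \cite[Corollary 3.12 / Proposition 7.8]{BS22}) together with \cite[Proposition 2.14]{BS23} for vector bundles. A presheaf of categories $\mathcal F$ on $(R_\Prism)^\op$ is a sheaf for the flat topology generated by the given finite families if two conditions hold. First, $\mathcal F$ takes finite products of prisms (disjoint unions, i.e.\ $A\to\prod_s A_s$) to products. Second, for every $(p,I)$-completely faithfully flat map of prisms $(A,I)\to(B,J)$, the functor $\mathcal F(A)\to\varprojlim_{\Delta}\mathcal F(B^{\bullet})$ is an equivalence.

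Here $B^\bullet$ is the \v{C}ech nerve, with $B^n$ the $(p,I)$-completed tensor product $B\widehat\otimes_A\cdots\widehat\otimes_A B$. By \cite[Corollary 3.12]{BS22} this is again a bounded prism, with $J=IB$ by \cite[Lemma 3.7]{BS22}. It is the coproduct in $R_\Prism$ of copies of $B$ over $A$, so fibre products exist and the \v{C}ech nerve is computed in $R_\Prism$. The structure maps $u$ play no role, since all objects lie over $(A,I,u)$.

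The proof then has the following steps.

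\textbf{Step 1.} Check the product condition. For a finite product $A=\prod A_s$ we have $\rmD(\prod A_s)\simeq\prod \rmD(A_s)$, and $(p,I)$-completeness and finite projectivity can be checked factorwise. This step is formal.

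\textbf{Step 2.} Reduce the general finite family to the case of a single faithfully flat map, by replacing $\{A\to A_s\}$ with $A\to\prod_s A_s$. Use Step 1 to identify the \v{C}ech nerves of the family and of this single map.

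\textbf{Step 3.} Treat $\hatD$. A $(p,I)$-completely faithfully flat map $A\to B$ becomes faithfully flat modulo $(p,I)^n$ for every $n$ (for the finitely generated ideal, by derived Nakayama). Ordinary flat descent for $\rmD(A/(p,I)^n)$ on the Koszul quotients is available via Lurie's faithfully flat descent for connective modules, extended to all of $\rmD$ via the $t$-structure. Passing to the limit over $n$ then identifies $\hatD(A)\simeq\varprojlim_n \rmD(A\otimes^L\mathrm{Kos}_n)$, and the two inverse limits commute. This gives descent for $\hatD$. Equivalently, one may cite \cite[Proposition 2.7]{BS23} directly.

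\textbf{Step 4.} Treat $\Vect$. It is the full subcategory of $\hatD$ on objects that are finite projective. I would show this property descends along $A\to B$. Descended objects are $(p,I)$-complete and lie in degree $0$, since $\pi_0$ can be checked after completed base change. Modulo $(p,I)$, finite projectivity descends by classical fpqc descent. Lifting idempotents, or using that derived complete modules which are finite projective modulo a finitely generated ideal are finite projective, then gives the result; compare \cite[Proposition 2.7]{BS23}.

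The main obstacle I expect is Step 3: commuting the derived-completion limit with the cosimplicial limit, and making sure that completed tensor products of bounded prisms compute the correct \v{C}ech nerve without Tor terms. The boundedness hypothesis together with \cite[Lemma 3.7]{BS22} should supply the latter. For the former, both are limits, so they commute formally once the level-wise statement modulo $(p,I)^n$ is established.
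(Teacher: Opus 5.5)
Your proposal is correct and follows essentially the same route as the paper. The paper simply cites \cite[Appendix A]{ALB19} and \cite{BS23} for the standard argument: reduce modulo powers of $(p,I)$, apply classical faithfully flat descent there, and pass to the limit. You spell out those details, including the $\hatD(-)$ case that the paper only calls ``similar''.

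The one slip is your citation of \cite[Proposition 2.14]{BS23}. That result is about quasiregular semiperfectoid covers of the final object, not descent of vector bundles, but your Step 4 does not depend on it.
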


\begin{proof}
    See \cite[Appendix A]{ALB19} for the proof in case of vector bundles. The argument for $\hatD(-)$ is similar. See also the discussion under \cite[Theorem 2.2]{BS23}.
\end{proof}

In fact, the inverse limit $\varprojlim_{(A,I,u)\in R_\Prism}$ can be computed by the totalization of certain cosimplicial diagram, or more precisely, the we can find covers of the final object in the topos $\Shv(R_\Prism)$. This fact is already known in \cite[Proposition 2.14]{BS23}, where covers of the final object of $\Shv(R_\Prism)$ are produced by the prismatic cohomology of quasiregular semiperfectoids which cover $R$. See also \cite[Lemma 3.3.10]{BL22} and \cite[Example 2.8]{BS23}. In this note, we would provide conditions for objects in $R_\Prism$ to cover the final object, and we actually produce these covers by the prismatic cohomology of relatively quasiregular semiperfectoid $\delta$-pairs, see Section \ref{section:relatively-qrsp-covers}. 

\begin{defn}\label{defn:cover-the-final-object}
    Let $R$ be a $p$-quasisyntomic ring. We say that an object $(A,I,u)\in R_\Prism$ {\it covers the final object in $\Shv(R_\Prism)$} if the following condition is satisfied:
    \begin{itemize}
        \item For any object $(B,J,v)\in R_\Prism$, there is $(B',J',v')\in R_\Prism$ with a $(p,J)$-completely faithfully flat map $(B,J,v)\rightarrow (B',J',v')$ and a map $(A,I,u)\rightarrow (B',J',v')$.
    \end{itemize}
\end{defn}

\begin{lem}\label{lem:cover-the-final-object}
    Let $R$ be a $p$-quasisyntomic ring. Assume that $(B,J,v)\in R_\Prism$ covers the final object and admits finite self-coproducts in $R_\Prism$. Let $(B^{\bullet},JB^\bullet,v^\bullet)$ denote the cosimiplicial object in $R_\Prism$ obtained by forming finite self-coproducts of $(B,J,v)$. Then we have the following equivalences of categories
    $$
    \hatD_\crys(R_\Prism,\calO_\Prism)=\varprojlim_{(A,I,u)\in R_\Prism}\hatD(A)\xrightarrow{\sim}\varprojlim_{\bullet\in \Delta}\hatD(B^\bullet),
    $$
    and 
    $$
    \Vect(R_\Prism,\calO_\Prism)=\varprojlim_{(A,I,u)\in R_\Prism} \Vect(A)\xrightarrow{\sim}\varprojlim_{\bullet\in \Delta}\Vect(B^\bullet).
    $$
\end{lem}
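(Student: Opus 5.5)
The approach is the standard Čech descent argument: first show that $h_B$ covers the final object, then identify the Čech nerve of $h_B \to *$ with the representable cosimplicial object $B^\bullet$, and finally apply descent (Proposition \ref{prop:faithfully-flat-descent}) to pass from the limit over $R_\Prism$ to the limit over $\Delta$. We write $F$ for either $\hatD(-)$ or $\Vect(-)$. By Proposition \ref{prop:faithfully-flat-descent}, $F$ is a sheaf, valued in presentable or ordinary $\infty$-categories, for the flat topology on $(R_\Prism)^\op$. For a sheaf $F$, the limit $\varprojlim_{(A,I,u)\in R_\Prism}F(A)$ is the value of the right Kan extension of $F$ on the final object of $\Shv(R_\Prism)$. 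This holds because the limit over the site computes mapping into $F$ from the terminal presheaf, and sheafification does not change that. So the first task is to show that $h_B$ covers the final object, meaning $h_B\to *$ is an effective epimorphism of sheaves.

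\textbf{Effective epimorphism.} For every $(C,K,w)\in R_\Prism$, Definition \ref{defn:cover-the-final-object} provides a flat cover $(C,K,w)\to (C',K',w')$ together with a map $(B,J,v)\to (C',K',w')$. Hence $h_B(C')\neq\emptyset$. Since this happens after passing to a cover of $C$, the map $h_B\to *$ is locally surjective, and therefore an effective epimorphism in the topos.

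\textbf{Čech nerve.} We must identify the Čech nerve of $h_B\to *$, namely the simplicial sheaf $h_B^{\times(n+1)}$. The objects $B^n$ are the $(n+1)$-fold coproducts of $(B,J,v)$ in $R_\Prism$, so by the universal property of coproducts we get
$$
\mathrm{Hom}(B^n,C)=\mathrm{Hom}(B,C)^{n+1}.
$$
Since morphisms go the other way in $(R_\Prism)^\op$, this says that $h_{B^n}=h_B^{\times(n+1)}$ already as presheaves. Representables are sheaves for this topology because $\calO$ satisfies flat descent, which is the same faithfully flat descent used in Proposition \ref{prop:faithfully-flat-descent}. Consequently this is also the product in sheaves.

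\textbf{Descent.} Descent along the effective epimorphism $h_B\to *$ gives
$$
\Gamma(*,F)\simeq\varprojlim_{\Delta}\Gamma\bigl(h_B^{\times(\bullet+1)},F\bigr)=\varprojlim_\Delta F(B^\bullet),
$$
where the last identification is the Yoneda lemma. It remains to check that this equivalence is the natural restriction functor displayed in the Lemma, and that the cosimplicial structure maps agree with those induced by the coproduct structure maps; both are formal.

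\textbf{Main obstacle.} The delicate point is justifying that a limit of $F$ over the large, non-small category $R_\Prism$, where $F$ is a sheaf of $\infty$-categories, is computed by descent along a cover of the final object. The cleanest route avoids topos theory and argues by cofinality directly. Consider the restriction
$$
\varprojlim_{R_\Prism}F\to\varprojlim_\Delta F(B^\bullet).
$$
Its inverse sends descent data $M^\bullet$ to the family $C\mapsto$ the descent of $M^\bullet$ pulled back along $C\to C'\leftarrow B$, using the cover $C\to C'$ and the Čech nerve of $C\to C'$. To see that this is well defined, note that if two maps $B\to C'$ are given, they factor through $B^1$ by the coproduct property, and the descent datum identifies the two pullbacks. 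Both compositions of the restriction with this inverse are then identities by flat descent for each $C$ (Proposition \ref{prop:faithfully-flat-descent}). This argument is where one must keep track of $(p,K)$-completed tensor products, using that the coproducts computing $C'\widehat{\otimes}_C C'$ exist as prisms; this bookkeeping is the step I would check most carefully.
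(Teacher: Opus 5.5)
Your proposal is correct and is essentially the paper's argument: the paper's one-line proof (``flat descent and cofinality'') is exactly descent along the cover $h_B\to *$, whose \v{C}ech nerve is represented by the self-coproducts $B^\bullet$, and you spell this out in topos-theoretic language. That version suffices by itself, so you do not need the hand-built inverse functor in your last paragraph, which would be awkward to make coherent for the $\infty$-category $\hatD$. The cofinality step can instead be done with Quillen's Theorem A: after flat descent restricts to objects $C$ receiving a map from $B$, the comma category of $[n]\mapsto B^n$ over $C$ is the category of simplices of $\mathrm{Hom}(B,C)^{\bullet+1}$, which is weakly contractible because $\mathrm{Hom}(B,C)\neq\emptyset$.
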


\begin{proof}
    This is immediate by flat descent and cofinality.
\end{proof}

\section{Transversal Objects and Coproducts in Absolute Prismatic Sites}\label{section:transversal-objects-and-coproducts-in-absolute-prismatic-sites}

In this section, we follow the idea of \cite[$\S 2.1$ and $\S 2.5$]{BL22} and introduce the notion of transversal objects in the absolute prismatic site of a $p$-quasisyntomic ring. The notion of {\it transversal prisms} is first introduced in \cite[Lemma 2.1.7]{ALB19} and \cite{ALB20}.

\begin{defn}[Transversal Objects]\label{defn:transversal-objects}
    Let $R$ be a $p$-quasisyntomic ring. An object $(A,I,u)\in R_\Prism$ is called {\it transversal} if it satisfies the following conditions:
    \begin{itemize}
        \item[(1)] The ring $A$ is $p$-torsion-free. 

        \item[(2)] The map $u:R\rightarrow A/I$ is $p$-completely flat. 

        \item[(3)] The cotangent complex $L\Omega^1_{(A/I)/(A\otimes R)}\in \rmD(A/I)$ has $p$-complete Tor-amplitude concentrated in homological degree $[1,1]$, where $A/I$ is regarded as an $A\otimes R$-algebra through $A\rightarrow A/I$, $u:R\rightarrow A/I$ and the universal property of tensor product. 
    \end{itemize}

    A transversal object $(A,I,u)$ is called a {\it transversal cover} if $u:R\rightarrow A/I$ is $p$-completely faithfully flat.
\end{defn}

\begin{rem}\label{rem:transversal-flat-morphism}
    Let $R$ be a $p$-quasisyntomic ring and $(A,I,u)\in R_\Prism$ be a transversal object. Suppose that $(A,I,u)\rightarrow (B,J,v)$ is a $(p,I)$-completely flat morphism in $R_\Prism$, then $(B,J,v)$ is also a transversal object. It suffices to note that 
    $$
    L\Omega^1_{(B/J)/B\otimes R}\simeq L\Omega^1_{(A/I)/A\otimes R}\otimes_A^L B\simeq L\Omega^1_{(A/I)/(A\otimes R)}\otimes^L_{A/I}B/J.
    $$
\end{rem}

\begin{lem}\label{lem:transveral-LOmegaR0}
    Let $R$ be a $p$-quasisyntomic ring. Let $(A,I,u)\in R_\Prism$ be an object such that $A$ is $p$-torsion-free and the map $u:R\rightarrow A/I$ is $p$-completely flat. If $L\Omega^1_R$ has $p$-complete Tor-amplitude concentrated in homological degree $[0,0]$, then $(A,I,u)$ is a transveral object in $R_\Prism$.
\end{lem}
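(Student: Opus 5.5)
We only need to verify condition (3) of Definition \ref{defn:transversal-objects}: conditions (1) and (2) are hypotheses. The plan is to use the transitivity triangle for the composite $A\to A\otimes R\to A/I$. Everything is $p$-completed, so $A\otimes R$ should be read as $A\widehat{\otimes}_{\Z_p}R$, or at least we work after derived $p$-completion. This gives
$$
L\Omega^1_{A\otimes R/A}\otimes^L_{A\otimes R}A/I\longrightarrow L\Omega^1_{(A/I)/A}\longrightarrow L\Omega^1_{(A/I)/(A\otimes R)}.
$$
By flat base change, $L\Omega^1_{A\otimes R/A}\simeq L\Omega^1_{R/\Z}\otimes^L_R(A\otimes R)$. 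Since $A$ is $p$-torsion-free, $A\otimes^L_{\Z}R$ is discrete modulo $p$-completion issues, which is one place we use hypothesis (1). Hence the first term is $L\Omega^1_R\otimes^L_R A/I$, with $R\to A/I$ given by $u$.

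Next we identify the terms. Since $I$ is locally generated by a nonzerodivisor, $L\Omega^1_{(A/I)/A}\simeq I/I^2[1]$, an invertible $A/I$-module placed in homological degree $1$. By hypothesis, $L\Omega^1_R$ has $p$-complete Tor-amplitude in $[0,0]$. Base change along $u$ (whose flatness is not even needed here) shows that $L\Omega^1_R\otimes^L_R A/I$ also has $p$-complete Tor-amplitude in $[0,0]$. Rotating the triangle gives
$$
L\Omega^1_{(A/I)/A}\longrightarrow L\Omega^1_{(A/I)/(A\otimes R)}\longrightarrow L\Omega^1_R\otimes^L_R A/I[1].
$$
The outer terms have $p$-complete Tor-amplitude in $[1,1]$. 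Tor-amplitude in a fixed interval is closed under extensions: apply $-\otimes^L_{A/I}(A/I)/p\otimes^L N$ for discrete $(A/I)/p$-modules $N$ and use the long exact sequence. So the middle term has $p$-complete Tor-amplitude in $[1,1]$, which is condition (3).

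The main obstacle is justifying the base-change identification $L\Omega^1_{(A\otimes R)/A}\simeq L\Omega^1_R\otimes_R(A\otimes R)$ with the correct completions. The plan is to reduce everything modulo $p$, which is legitimate because $p$-complete Tor-amplitude is detected by $-\otimes^L\F_p$. Modulo $p$, the tensor product becomes $A/p\otimes_{\F_p}R/^Lp$ and the cotangent complex of a base change $\F_p\to A/p$ applies directly. Here the $p$-torsion-freeness of $A$ makes $A/p$ a discrete $\F_p$-flat ring, so there is no derived subtlety. After that, the argument is purely formal.
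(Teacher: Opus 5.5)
Your proposal is correct and follows the paper's proof: you use the same transitivity triangle for $A\to A\otimes R\to A/I$ and identify the first term with $L\Omega^1_R\otimes^L_R A/I$. You also make explicit two steps the paper leaves implicit, namely $L\Omega^1_{(A/I)/A}\simeq I/I^2[1]$ and the closure of $p$-complete Tor-amplitude $[1,1]$ under extensions. Your base-change worry is easily settled: a $p$-torsion-free derived $p$-complete ring is already torsion-free, hence $\Z$-flat.
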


\begin{proof}
    Consider the chain of ring homomorphisms $A\rightarrow A\otimes R\rightarrow A/I$ and the associated cofiber sequence of cotangent complexes
    $$
    L\Omega^1_{(A\otimes R/A)}\otimes^L_{A\otimes R}(A/I)\rightarrow L\Omega^1_{(A/I)/A}\rightarrow L\Omega^1_{(A/I)/(A\otimes R)},
    $$
    and note that 
    $$L\Omega^1_{(A\otimes R/A)}\otimes^L_{A\otimes R}(A/I)\simeq L\Omega^1_R\otimes^L_R(A/I),
    $$ 
    and we see that $L\Omega^1_{(A/I)/(A\otimes R)}$ has $p$-complete Tor-amplitude in homological degree $[1,1]$.
\end{proof}

\begin{ex}\label{ex:transversal-objects-in-ZpPrism}
    An object in $(\Z_p)_\Prism$, that is, a bounded prism $(A,I)$, is a transversal object in the sense of Definition \ref{defn:transversal-objects} if and only if $A/I$ is $p$-torsion-free, that is, if and only if it is a transversal prism. In order to confirm this, it suffices to resort to Lemma \ref{lem:transveral-LOmegaR0}. 
\end{ex}

\begin{lem}\label{lem:transversal-for-u-quasietale}
    Let $R$ be a $p$-quasisyntomic ring. Let $(A,I,u)\in R_\Prism$ be an object such that $A$ is $p$-torsion-free and the map $u:R\rightarrow A/I$ is $p$-completely flat. Then the following statements hold.
    \begin{itemize}
        \item[(1)] If $L\Omega^1_{(A/I)/R}$ has $p$-complete Tor-amplitude concentrated in degree $[1,1]$ and $L\Omega^1_A$ has $(p,I)$-complete Tor-amplitude concentrated in degree $[0,0]$, then $(A,I,u)$ is a transversal object in $R_\Prism$. 

        \item[(2)] If $L\Omega^1_{(A/I)/R}$ is $p$-completely zero, then $(A,I,u)$ is a transveral object in $R_\Prism$ if and only if $L\Omega^1_A$ has $(p,I)$-complete Tor-amplitude concentrated in degree $[0,0]$.
    \end{itemize}
\end{lem}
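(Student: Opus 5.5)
We will use the transitivity triangle for the composite $R\to A\otimes R\to A/I$, as opposed to the one over $A$ used in Lemma \ref{lem:transveral-LOmegaR0}. The triangle reads
$$
L\Omega^1_{(A\otimes R)/R}\otimes^L_{A\otimes R}(A/I)\rightarrow L\Omega^1_{(A/I)/R}\rightarrow L\Omega^1_{(A/I)/(A\otimes R)}.
$$
Base change along $R\to A\otimes R$ gives $L\Omega^1_{(A\otimes R)/R}\simeq L\Omega^1_A\otimes^L_A(A\otimes R)$, where we use that $A$ is $p$-torsion-free (or simply derived base change for the cotangent complex). Tensoring up, the first term becomes $L\Omega^1_A\otimes^L_A A/I$. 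So the triangle is
$$
L\Omega^1_A\otimes^L_A (A/I)\rightarrow L\Omega^1_{(A/I)/R}\rightarrow L\Omega^1_{(A/I)/(A\otimes R)}.
$$
All three terms are regarded after $p$-completion. The Tor-amplitude statements are checked after applying $-\otimes^L_{A/I}N$ for $p$-torsion $A/I$-modules $N$, or equivalently by reducing mod $p$.

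For (1), we use two facts. If $L\Omega^1_A$ has $(p,I)$-complete Tor-amplitude in $[0,0]$, then $L\Omega^1_A\otimes^L_A A/I$ has $p$-complete Tor-amplitude in $[0,0]$ over $A/I$. This holds because $A/I$ is $(p,I)$-complete and a $p$-torsion $A/I$-module is $(p,I)$-torsion as an $A$-module, so the $(p,I)$-complete flatness transfers. The middle term has amplitude in $[1,1]$ by hypothesis. The long exact sequence of homotopy after tensoring with $N$ then shows that the cofiber has amplitude in $[1,2]$. The issue is the degree-$2$ part, which is the kernel of $\pi_1(\text{first}\otimes N)\to\cdots$; more precisely, $\pi_2(\mathrm{cof}\otimes N)$ is $\ker(\pi_1(L\Omega^1_A\otimes N)\to\pi_1(L\Omega^1_{(A/I)/R}\otimes N))$. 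Since the first term is flat, $\pi_1(L\Omega^1_A\otimes N)=0$, so $\pi_2$ of the cofiber vanishes. Likewise $\pi_0$ of the cofiber is a quotient of $\pi_0$ of the middle term, which is $0$. The remaining concern is $\pi_1$ of the cofiber: it sits in $0\to \pi_1(\text{mid}\otimes N)\to \pi_1(\mathrm{cof}\otimes N)\to \pi_0(\text{first}\otimes N)\to 0$. To get Tor-amplitude exactly $[1,1]$, we need $\pi_1(\mathrm{cof}\otimes N)$ to be exact in $N$, and $\pi_2,\pi_0$ to vanish. Exactness follows from this short exact sequence together with the exactness of both outer terms in $N$, by the snake lemma. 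This shows transversality, since $A$ is $p$-torsion-free and $u$ is flat by assumption.

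For (2), we use that the middle term is $p$-completely zero. The cofiber sequence then gives $L\Omega^1_{(A/I)/(A\otimes R)}\simeq L\Omega^1_A\otimes^L_A(A/I)[1]$ $p$-completely. Hence transversality, i.e. $p$-complete amplitude $[1,1]$ of the left side, is equivalent to $L\Omega^1_A\otimes^L_A A/I$ having $p$-complete Tor-amplitude $[0,0]$ over $A/I$. The main obstacle is the converse: we must lift this to $(p,I)$-complete flatness of $L\Omega^1_A$ over $A$. For this we use that $I$ is locally generated by a nonzerodivisor $d$ and that the objects are derived $(p,I)$-complete. A $(p,d)$-torsion module $M$ is built from $d$-torsion pieces, namely $M[d]$ and $M/M[d]$, inductively and via colimits. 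For a $(p,d)$-torsion $A/d$-module $N$, we have $L\Omega^1_A\otimes^L_A N\simeq (L\Omega^1_A\otimes^L_A A/d)\otimes^L_{A/d}N$, which is concentrated in degree $0$. Dévissage over the $d$-adic filtration, using bounded-torsion and complete-flatness criteria as in \cite[Lemma 4.7]{BS22}-type arguments (flatness mod $(p,I)$ suffices), then gives $(p,I)$-complete Tor-amplitude $[0,0]$. Combining the two directions proves (2).
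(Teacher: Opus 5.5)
Your proposal is correct and follows the paper's proof. Both use the transitivity triangle for $R\to A\otimes R\to A/I$, identify the first term with $L\Omega^1_A\otimes^L_A(A/I)$, and read both statements off the long exact sequence; note that the cofiber of a map from amplitude $[0,0]$ to amplitude $[1,1]$ lands directly in $[1,1]$, so your ``$[1,2]$'' detour and the snake-lemma exactness check are unnecessary.

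The ``main obstacle'' you flag in (2) is not really one, so the d\'evissage can be dropped. Since $(A/I)/p=A/(p,I)$ and $(L\Omega^1_A\otimes^L_A A/I)\otimes^L_{A/I}N\simeq L\Omega^1_A\otimes^L_A N$ for every $A/(p,I)$-module $N$, $p$-complete flatness of $L\Omega^1_A\otimes^L_A A/I$ over $A/I$ is literally the same condition as $(p,I)$-complete flatness of $L\Omega^1_A$ over $A$.
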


\begin{proof}
    Consider the chain of ring homomorphisms $R\rightarrow A\otimes R\rightarrow A/I$ and the associated cofiber sequence of cotangent complexes
    $$
    L\Omega^1_{(A\otimes R/R)}\otimes^L_{A\otimes R}(A/I)\rightarrow L\Omega^1_{(A/I)/R}\rightarrow L\Omega^1_{(A/I)/(A\otimes R)},
    $$
    and note that
    $$
    L\Omega^1_{(A\otimes R/R)}\otimes^L_{A\otimes R}(A/I)\simeq L\Omega^1_A\otimes^L_A(A/I),
    $$
    hence the statements hold.
\end{proof}

\begin{ex}\label{ex:Breuil-Kisin}
    Let $K$ be a finite extension of $\Q_p$, and $\calO_K$ be the ring of integers of $K$, and choose a uniformizer $\varpi\in \calO_K$, and $k=\calO_K/\varpi$ the residue field, and finally, let $E_K(z)\in W(k)[z]$ be a minimal polynomial of $\varpi$. Then the Breuil-Kisin prism $(W(k)[[z]],(E_K))$, naturally regarded as an object in $(\calO_K)_\Prism$, is a transversal object. In order to confirm this, by Lemma \ref{lem:transversal-for-u-quasietale}, it suffices to check that $L\Omega^1_{W(k)[[z]]/\Z}\otimes^L_{W(k)[[z]]}\calO_K$ has $p$-complete Tor-amplitude concentrated in degree $[0,0]$. In fact, we claim that there is a natural equivalence 
    $$
    L\Omega^1_{W(k)[z]/\Z}\otimes^L_{W(k)[z]}\calO_K\xrightarrow{\sim}L\Omega^1_{W(k)[[z]]/\Z}\otimes^L_{W(k)[[z]]}\calO_K.
    $$
    
    First we consider the chain of ring homomorphisms $\Z\rightarrow W(k)[z]\rightarrow W(k)[[z]]$ and the associated cofiber sequence:
    $$
    L\Omega^1_{W(k)[z]/\Z}\otimes^L_{W(k)[z]}W(k)[[z]]\rightarrow L\Omega^1_{W(k)[[z]]/\Z}\rightarrow L\Omega^1_{W(k)[[z]]/W(k)[z]},
    $$
    and by applying $-\otimes_{W(k)[[z]]}\calO_K$ we obtain the cofiber sequence
    $$
    L\Omega^1_{W(k)[z]/\Z}\otimes^L_{W(k)[z]}\calO_K\rightarrow L\Omega^1_{W(k)[[z]]/\Z}\otimes^L_{W(k)[[z]]}\calO_K\rightarrow L\Omega^1_{W(k)[[z]]/W(k)[z]}\otimes^L_{W(k)[[z]]}\calO_K.
    $$

    Then we consider the chain of ring homomorphisms $W(k)[z]\rightarrow W(k)[[z]]\rightarrow \calO_K$ and the associated cofiber sequence implies that
    $$
    L\Omega^1_{W(k)[[z]]/W(k)[z]}\otimes^L_{W(k)[[z]]}\calO_K\simeq 0,
    $$
    and we find that $L\Omega^1_{W(k)[[z]]/\Z}\otimes^L_{W(k)[[z]]}\calO_K\simeq L\Omega^1_{W(k)[z]/\Z}\otimes^L_{W(k)[z]}\calO_K$ which as $p$-complete Tor-amplitude concentrated in degree $[0,0]$.
\end{ex}

In the category $R_\Prism$, coproducts with transversal objects exist. In \cite[Proposition 2.4.5]{BL22}, the existence of such coproducts in $(\Z_p)_\Prism$ is verified by using the prismatic envelopes. However, the standard formalism of prismatic envelopes (for example \cite[Proposition 3.13]{BS22}) is not enough convenient for a general $p$-quasisyntomic ring. Instead, we resort to the theory of prismatic cohomology relative to $\delta$-rings in \cite{AKN23}, and realize the expected coproducts in $R_\Prism$ as certain prismatic cohomology relative to $\delta$-rings (see Construction \ref{construction:coproducts-in-RPrism}).

\begin{defn}[$\delta$-Pairs, {\cite[Definition 2.1]{AKN23}}]\label{defn:delta-pairs}
    A {\it $\delta$-pair} is a pair $(A,R)$, where $A$ is a $\delta$-ring and $R$ is a commutative $A$-algebra. 

    A $\delta$-pair $(A,R)$ is called 
    \begin{itemize}
        \item bounded if both $A$ and $R$ have bounded $p$-power torsion;

        \item pre-prismatic if the kernel of $A\rightarrow R$ contains a Cartier divisor $I$ such that Zariski locally on $\Spec A$ any generator $d$ of $I$ has the property that $\delta(d)$ maps to a unit in $R^\wedge_p$;

        \item prismatic if it is pre-prismatic and $(A,I)$
    \end{itemize}
\end{defn}

\begin{defn}[Prismatic Sites Relative to $\delta$-Rings, {\cite[Definition 2.4]{AKN23}}]\label{defn:prismatic-sites-relative-to-delta-rings}
    Let $(A,R)$ be a $\delta$-pair. We define a category $(R/A)_\Prism$ as follows:
    \begin{itemize}
        \item The objects of $(R/A)_\Prism$ are quadruples $(B,J,u,u')$, where $(B,J)$ is a bounded prism and $u:R\rightarrow B/J$ is a ring homomorphism, and $u':A\rightarrow B$ is a map of $\delta$-rings, such that the following diagram is commutative
        $$
        \xymatrix@R=50pt@C=50pt{
        A \ar[r]^{u'} \ar[d] & B \ar[d] \\
        R\ \ar[r]^u       & B/J
        }
        $$

        \item The morphisms in $(R/A)_\Prism$ are naturally defined so that everything is compatible.
    \end{itemize}
    We refer to $(R/A)_\Prism$ as the {\it prismatic site of $R$ relative to $A$}. Similarly to Definition \ref{defn:absolute-prismatic-sites}, we define the {\it flat topology} on $(R/A)_\Prism^\op$ to be the $(p,J)$-completely faithfully flat topology in $B$. Finally, the {\it prismatic cohomoloyg of $R$ relative to $A$} is defined as 
    $$
    \Prism_{R/A}:=\varprojlim_{(B,J,u,u')\in (R/A)_\Prism}B.
    $$
    Also, we define 
    $$
    I\Prism_{R/A}:=\varprojlim_{(B,J,u,u')\in (R/A)_\Prism}J.
    $$
\end{defn}

\begin{defn}[Relatively Quasiregular Semiperfectoids, {\cite[Definition 9.2]{AKN23}}]\label{defn:relatively-quasiregular-semiperfectoids}
    A bounded $\delta$-pair $(A,R)$ is called {\it relatively quasiregular semiperfectoid} if the following conditions are satisfied:
    \begin{itemize}
        \item[(1)] $R$ is $p$-complete.

        \item[(2)] There is a factorization $A\rightarrow A'\rightarrow R$ where $A\rightarrow A'$ is a $p$-completely relatively perfect map of $\delta$-rings,
        and the $\delta$-pair $(A',R)$ is pre-prismatic as exhibited by an ideal $I\subset A'$, and the map $A'\rightarrow R$ is surjective, and $L\Omega^1_{R/(A'/I)}\in \rmD(R)$ has $p$-complete Tor-amplitude concentrated in homological degree $[1,1]$.
    \end{itemize}
    Note that we say a map of $\delta$-rings $B\rightarrow B'$ is $p$-completely relatively perfect if the canonical map $B\otimes^L_{\varphi_B,B}B'\xrightarrow{\varphi_{B'}}B'$ is a $p$-complete equivalence, and in particular $\varphi_B$ and $\varphi_B'$ together induce the following $p$-complete equivalence, 
    $$
    L\Omega^1_{B'/B}\otimes^L_{B,\varphi_B}B\simeq L\Omega^1_{(B\otimes^L_{\varphi_B,B}B')/B}\rightarrow L\Omega^1_{B'/B},
    $$
    which however is a zero map after applying $-\otimes^L_{B'}B'/p$, and therefore $L\Omega^1_{B'/B}$ is $p$-completely zero.
\end{defn}

\begin{rem}\label{rem:relatively-quasiregular-semiperfectoids}
    Let $(A,R)$ be a relatively quasiregular semiperfectoid $\delta$-pair, and $A\rightarrow A'\rightarrow R$ a factorization as in Defintion \ref{defn:relatively-quasiregular-semiperfectoids} (2). Then $L\Omega^1_{R/A'}\in \rmD(R)$ has $p$-complete Tor-amplitude concentrated in homological degree $[1,1]$: it suffices to consider the chain of ring homomorphisms $A'\rightarrow A'/I\rightarrow R$ and the associated cofiber sequence of cotangent complexes
    $$
    L\Omega^1_{(A'/I)/A'}\otimes^L_{A'/I}R\rightarrow L\Omega^1_{R/A'}\rightarrow L\Omega^1_{R/(A'/I)}.
    $$
    Moreover, since $A\rightarrow A'$ is $p$-completely relatively perfect and in particular $L\Omega^1_{A'/A}$ is $p$-completely zero, we see that $L\Omega^1_{R/A}$ also has $p$-complete Tor-amplitude concentrated in degree $[1,1]$.
\end{rem}

We will need the following result. 

\begin{thm}[Bhatt-Scholze, {\cite[Theorem 9.6]{AKN23}}]\label{thm:prismatic-cohomology-of-relative-quasiregular-semiperfectoids}
    Let $(A,R)$ be a relatively quasiregular semiperfectoid $\delta$-pair. Then $\Prism_{R/A}$ is concentrated in degree $0$, and the pair $(\Prism_{R/A},I\Prism_{R/A})$ gives an initial object in the category $(R/A)_\Prism$ with a map $\iota:R\rightarrow \Prism_{R/A}/I\Prism_{R/A}$. Moreover, the map $R\rightarrow \Prism_{R/A}/I\Prism_{R/A}$ is $p$-completely faithfully flat.

    If more specifically, there is a factorization $A\rightarrow A'\rightarrow R$ as in Definition \ref{defn:relatively-quasiregular-semiperfectoids} (2), with $(A',I)$ a prism and $R=A'/(I,r_1,r_2,\ldots)$, where $r_1,r_2,\ldots\in A'/I$ is a regular sequence, then
    we have 
    $$
    \Prism_{R/A}\simeq \Prism_{R/A'}\simeq A'\{\frac{r_1}{I},\frac{r_2}{I},\ldots\}^\wedge_{(p,I)},
    $$
    where $\Prism_{R/A'}$ as the prismatic cohomology for the $\delta$-pair $(A',R)$ coincide with the relative prismatic cohomology defined in \cite{BS22}.
\end{thm}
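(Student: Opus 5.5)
The plan is to reduce in two steps to the classical statement \cite[Proposition 7.10]{BS22} for a bounded prism and a quasiregular quotient of it. \textbf{Step 1: replace $A$ by $A'$.} Fix a factorization $A\rightarrow A'\rightarrow R$ as in Definition \ref{defn:relatively-quasiregular-semiperfectoids} (2). Given $(B,J,u,u')\in (R/A)_\Prism$, I would show that $u':A\rightarrow B$ extends uniquely to a $\delta$-map $A'\rightarrow B$ lifting $A'\rightarrow R\xrightarrow{u} B/J$. The tool is deformation theory: $L\Omega^1_{A'/A}$ is $p$-completely zero and $B$ is $(p,J)$-complete, so lifts exist and are unique along the successive square-zero thickenings $B/(p,J)^n$. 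Compatibility with $\delta$-structures should follow from the relative perfectness $A\otimes^L_{\varphi,A}A'\simeq A'$, since a Frobenius-twisted uniqueness argument forces the lift to commute with $\varphi$. This yields an equivalence $(R/A)_\Prism\simeq (R/A')_\Prism$, hence $\Prism_{R/A}\simeq\Prism_{R/A'}$ with the same ideal.

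\textbf{Step 2: replace $A'$ by a prism.} Let $(B,J)$ be any object of $(R/A')_\Prism$. Then $I$ maps into $J$. Since a local generator $d$ of $I$ has $\delta(d)$ a unit in $R^\wedge_p$, hence in $B$ by $(p,J)$-adic completeness, $d$ is distinguished in $B$. By rigidity of maps of prisms \cite[Lemma 3.5]{BS22} this gives $IB=J$. Moreover every such $B$ factors uniquely through the $(p,I)$-completion of $A'$, and that completion is a bounded prism. We may therefore assume $(A',I)$ is a bounded prism and that $R$ is a quotient of $A'/I$ with $L\Omega^1_{R/(A'/I)}$ of $p$-complete Tor-amplitude $[1,1]$. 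This is exactly the setting of \cite[Proposition 7.10]{BS22}.

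\textbf{Step 3: the quasiregular case.} For completeness I would sketch this case as follows. Write $L\Omega^1_{R/(A'/I)}\simeq N[1]$ with $N$ $p$-completely flat over $R$. The Hodge--Tate comparison puts an increasing conjugate filtration on $\overline{\Prism}_{R/A'}$ with graded pieces $\wedge^i L\Omega^1_{R/(A'/I)}[-i]\simeq \Gamma^i_R(N)$. Each graded piece is discrete and $p$-completely flat over $R$, and the $0$-th piece is $R$. Hence $\overline{\Prism}_{R/A'}$ is discrete and $p$-completely faithfully flat over $R$. By derived $(p,I)$-completeness and $I$ being a Cartier divisor, $\Prism_{R/A'}$ is then discrete and $I$-torsion-free, so $(\Prism_{R/A'},I\Prism_{R/A'})$ is a bounded prism receiving $R$.

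It remains to identify this derived object with the site-theoretic limit and to prove initiality. I would construct $\Prism_{R/A'}$ directly as the prismatic envelope. In the regular-sequence case this is \cite[Proposition 3.13]{BS22}, which gives $A'\{r_i/I\}^\wedge_{(p,I)}$, flat over $A'$, by the universal property of $\delta$-adjoining $r_i/d$. The general case follows by writing $R$ as a filtered colimit or quasisyntomic-local version of such quotients. An object of the site that admits a map to every object, compatibly, equals the limit of the identity diagram exactly when it is initial. So the universal property of the envelope yields both initiality and the formula $\Prism_{R/A}=\varprojlim B$.

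The main obstacle I expect is this last identification in the non-regular quasiregular case, namely matching the envelope/derived construction with the limit over the site while keeping flatness. The Hodge--Tate graded computation handles flatness. For the comparison, I would first try reducing to the regular case via $p$-completely faithfully flat descent along a quasisyntomic cover of $A'/I$ by a ring over which $R$ becomes a regular quotient.
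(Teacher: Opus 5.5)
Your Step 1 matches the paper's first move (unique $p$-complete formal lifting along the relatively perfect map $A\to A'$, cf.\ \cite[Theorem 1.2 (6)]{AKN23}). The Hodge--Tate part of your Step 3 is exactly the first half of the paper's proof of Theorem \ref{thm:prismatic-cohomology-of-prismatic-relative-quasiregular-semiperfectoids}. There is no separate ``matching'' problem here: the Hodge--Tate comparison is a statement about the site-theoretic $\Prism_{R/A'}$ itself, and \cite[Lemma 7.7 (2)]{BS22} then makes $(\Prism_{R/A'},I\Prism_{R/A'})$ an object of the site.

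The genuine gap is initiality for a non-regular quasiregular quotient, which you flag but do not prove. Envelopes as in \cite[Proposition 3.13]{BS22} are only available for regular sequences, and a quasiregular quotient of $A'/I$ need not be a filtered colimit of regular quotients of $A'/I$. You also neither construct a flat cover over which $R$ becomes a regular quotient nor say how initiality would descend along one. As written, your argument only covers the regular case.

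The missing device, which the paper uses, is \cite[Lemma 7.7 (3)]{BS22}. Evaluating the canonical maps on the object $\Prism_{R/A}$ itself gives an idempotent endomorphism $e_R$ of $\Prism_{R/A}$, natural in $R$, and $\Prism_{R/A}$ is initial iff $e_R=\mathrm{id}$. Naturality lets one test this after a \emph{surjection onto} $R$ rather than after a cover of $R$:
\begin{itemize}
\item Choose generators $f_t$ of $\ker(A/I\to R)$. The map $R'=A/I\langle X_t^{1/p^\infty}\rangle/(X_t-f_t)^\wedge\to R$, $X_t\mapsto 0$, is surjective on cotangent complexes.
\item By Hodge--Tate, $\Prism_{R'/A}\to\Prism_{R/A}$ is then surjective, so $e_{R'}=\mathrm{id}$ forces $e_R=\mathrm{id}$.
\item $R'$ genuinely is a ($p$-completed) filtered colimit of the rings $P=A/I\langle X_1^{1/p^\infty},\ldots,X_r^{1/p^\infty}\rangle/(X_1-f_1,\ldots,X_r-f_r)$.
\item Each $P$ is a regular quotient for the prism $A''=A\langle X_1^{1/p^\infty},\ldots,X_r^{1/p^\infty}\rangle^\wedge_{(p,I)}$, which is relatively perfect over $A$. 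Hence $\Prism_{P/A}\simeq\Prism_{P/A''}$ is a regular-sequence envelope, which is initial by \cite[Example 7.9]{BS22}.
\end{itemize}

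A smaller issue is in Step 2: the $(p,I)$-completion of $A'$ need not be a prism. Pre-prismatic only asks that $\delta(d)$ become a unit in $R^\wedge_p$, not modulo $(p,I)$. For instance, take $A=A'=\Z_p[x,y]$ with $\varphi(x)=x^p+py$, $\varphi(y)=y^p$, $I=(x)$ and $R=A'/(x,y-1)=\Z_p$. This pair is relatively quasiregular semiperfectoid, yet $\delta(x)=y$ is not a unit in $(A')^\wedge_{(p,x)}$. You would also have to invert $\delta(d)$ before completing, and then check that the result is a bounded prism. The paper skips this reduction as well (``we can assume that $(A,I)$ is a bounded prism''), so this gap is shared with the source rather than a divergence from it.
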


\begin{proof}
     If $A\rightarrow A'\rightarrow R$ is a factorization as in Definition \ref{defn:relatively-quasiregular-semiperfectoids} (2), then by \cite[Theorem 1.2 (6)]{AKN23}, there is a canonical equivalence $\Prism_{R/A}\xrightarrow{\sim}\Prism_{R/A'}$. Note that we have $L\Omega^1_{A'/A}$ is $p$-completely zero, and therefore the $p$-complete formal lifting problem for $A\rightarrow A'$ can be solved uniquely. Thus we can assume that the $\delta$-pair $(A,R)$ is prismatic, in the sense that $(A,I)$ is a bounded prism and $R$ is an $A/I$-algebra. We are reduced to verify the following Theorem \ref{thm:prismatic-cohomology-of-prismatic-relative-quasiregular-semiperfectoids}. 
\end{proof}

\begin{thm}[Bhatt-Scholze, {\cite[Proposition 7.10]{BS22}}]\label{thm:prismatic-cohomology-of-prismatic-relative-quasiregular-semiperfectoids}
    Let $(A,I)$ be a bounded prism, and let $R$ be an $A/I$-algebra such that $A/I\rightarrow R$ is surjective and $L\Omega^1_{R/(A/I)}$ has $p$-complete Tor-amplitude concentrated in homological degree $[1,1]$. Then $\Prism_{R/A}$ is concentrated in degree $0$, and the pair $(\Prism_{R/A},I\Prism_{R/A})$ gives an initial object in the category $(R/A)_\Prism$ with a map $\iota:R\rightarrow \Prism_{R/A}/I\Prism_{R/A}$. Moreover, the map $R\rightarrow \Prism_{R/A}/I\Prism_{R/A}$ is $p$-completely faithfully flat.
\end{thm}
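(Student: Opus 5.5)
Since this is Bhatt--Scholze's result, I would follow the strategy of \cite[Proposition 7.10]{BS22}: first treat the regular-sequence case, where the prismatic envelope can be written down explicitly, and then reach the general case by $p$-completely faithfully flat descent along a quasiregular semiperfectoid-type cover.

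\textbf{Step 1 (explicit case).} Suppose $R=A/(I,r_1,\ldots,r_n)$, where $r_1,\ldots,r_n$ is a regular sequence in $A/I$ and the lifts to $A$ are arbitrary. Put $C:=A\{\frac{r_1}{I},\ldots,\frac{r_n}{I}\}^\wedge_{(p,I)}$.
\begin{itemize}
\item By \cite[Proposition 3.13]{BS22}, $C$ is $(p,I)$-completely flat over $A$ and $(C,IC)$ is a bounded prism.
\item By construction, $C$ is initial among prisms over $(A,I)$ in which the images of the $r_i$ lie in $IC$. This says exactly that $(C,IC)$ is initial in $(R/A)_\Prism$.
\item An initial object makes the limit defining $\Prism_{R/A}$ equal to $C$. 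So $\Prism_{R/A}$ is discrete and $I\Prism_{R/A}=IC$.
\item Faithful flatness of $R\to C/IC$: the conjugate (Hodge--Tate) filtration on $C/IC$ has graded pieces given by the divided powers $\Gamma^*_R(\mathcal{N})$. Here $\mathcal{N}$ is the conormal module, which is free, so the pieces are free $R$-modules. Hence $C/IC$ is $p$-completely faithfully flat over $R$. Passing to colimits extends all of this to countable regular sequences.
\end{itemize}

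\textbf{Step 2 (reduction of the general case).} Now let $R$ be general with $A/I\to R$ surjective and $L\Omega^1_{R/(A/I)}$ of $p$-complete Tor-amplitude in $[1,1]$.
\begin{itemize}
\item Choose a $p$-completely faithfully flat map $(A,I)\to(A_\infty,IA_\infty)$ to a prism in which enough roots exist, e.g. adjoin compatible $p$-power roots of lifts of the kernel generators and $(p,I)$-complete.
\item Flat base change gives $\Prism_{R/A}\widehat\otimes_A A_\infty\simeq \Prism_{R_\infty/A_\infty}$, using that relative prismatic cohomology commutes with flat base change of the prism \cite[Theorem 7.2]{BS22}.
\item The Tor-amplitude hypothesis says the kernel of $A/I\to R$ is $p$-completely quasiregular, and it stays so after the base change.
\item Over $A_\infty$, I would cover further by a $p$-completely faithfully flat map so that $R$ becomes a $p$-completed filtered colimit of quotients by regular sequences. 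Step 1 then applies on the cover and its Čech nerve.
\end{itemize}

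\textbf{Step 3 (descent).} Discreteness and flatness descend along $p$-completely faithfully flat maps by Proposition~\ref{prop:faithfully-flat-descent}. The initial-object property follows by descending maps of prisms.

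\textbf{Main obstacle.} The hard part is Step 2 for a general quasiregular ideal that is not generated by a regular sequence. An alternative that avoids explicit covers is to use the Hodge--Tate comparison directly: $\overline{\Prism}_{R/A}$ carries an increasing filtration with graded pieces $\wedge^i L\Omega^1_{R/(A/I)}[-i]$. The Tor-amplitude hypothesis identifies each such piece with $\Gamma^i_R(\mathcal{N})$ in degree $0$, with $\mathcal{N}$ $p$-completely flat. This gives both discreteness and $p$-complete faithful flatness of $R\to\overline{\Prism}_{R/A}$. The remaining point is to deduce that $\Prism_{R/A}$ itself is discrete and is a prism, by $I$-adic dévissage and $(p,I)$-completeness. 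Initiality then follows because the Čech–Alexander description computes $\Prism_{R/A}$ as the limit over the site, and that limit is attained.
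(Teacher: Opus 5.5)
There is a genuine gap, both in your main line and in the alternative.

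\textbf{The main line (Steps 2--3) does not go through.} For a general bounded prism there is no $(p,I)$-completely faithfully flat map of prisms that adjoins compatible $p$-power roots of arbitrary lifts $\tilde f_t$ compatibly with the $\delta$-structure. Results of Andr\'e-lemma type are available only over perfect prisms. More importantly, flat base change of $(A,I)$ does not change the nature of the ideal $\ker(A/I\to R)$. After base change it is still merely quasiregular, and nothing makes it a filtered colimit of ideals generated by regular sequences after a further flat cover. You flag this yourself as the main obstacle, so Step 3 has nothing to descend.

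\textbf{The alternative reproduces only the first half of the paper's proof.} The Hodge--Tate comparison does make $\overline{\Prism}_{R/A}$ discrete and $p$-completely faithfully flat over $R$. Then \cite[Lemma 7.7 (2)]{BS22} makes $(\Prism_{R/A},I\Prism_{R/A})$ an object of $(R/A)_\Prism$.

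\textbf{Initiality does not follow from ``the limit is attained''.} The Hodge--Tate comparison is a statement about derived prismatic cohomology. That object only comes with natural maps $\Prism_{R/A}\to B$ for each $(B,J)\in(R/A)_\Prism$. That it agrees with the limit over the site for such non-smooth $R$ is essentially what the theorem asserts. Evaluating the natural map at $B=\Prism_{R/A}$ gives an idempotent endomorphism $e$, and initiality is equivalent to $e=\mathrm{id}$; this is the real content of the proof.

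\textbf{How the paper proves $e=\mathrm{id}$.} It reduces along a surjection rather than a flat cover.
\begin{enumerate}
\item Choose generators $f_t$ of $\ker(A/I\to R)$ and set $R'=A/I\langle X_t^{1/p^\infty}\rangle/(X_t-f_t)^\wedge\to R$, with $X_t\mapsto 0$.
\item This map is surjective on rings and on cotangent complexes. Hence $\Prism_{R'/A}\to\Prism_{R/A}$ is surjective by Hodge--Tate, compatibly with the idempotents, so it suffices to treat $R'$.
\item $R'$ is a filtered colimit of the rings $P=A/I\langle X_1^{1/p^\infty},\ldots,X_r^{1/p^\infty}\rangle/(X_1-f_1,\ldots,X_r-f_r)$.
\item Each $P$ is a regular-sequence quotient of $A'=A\langle X_1^{1/p^\infty},\ldots,X_r^{1/p^\infty}\rangle^\wedge_{(p,I)}$.
\item Since $A\to A'$ is relatively perfect, $\Prism_{P/A}\simeq\Prism_{P/A'}$ compatibly with idempotents.
\item Your Step 1 (\cite[Example 7.9]{BS22}) then finishes.
\end{enumerate}

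\textbf{The missing idea.} Adjoin the $p$-power roots as \emph{free} perfect variables on the prism, and impose $X_t=f_t$ only in the quotient. This turns an arbitrary quasiregular quotient into a colimit of regular ones. Combined with the idempotent formulation of initiality and surjectivity along $R'\twoheadrightarrow R$, it replaces your flat-cover descent.
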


\begin{proof}
    The argument is essentially the same as that of \cite[Proposition 7.10]{BS22}. Here we just transport the argument for confidence.
    
    Since $L\Omega^1_{R/(A/I)}[-1]$ is $p$-completely flat, and thus the derived $p$-completions of all $L\Omega^i_{R/(A/I)}[-i]$ are $p$-completely flat $R$-modules and therefore discrete. Thus the Hodge-Tate comparison implies that $\overline{\Prism}_{R/A}$ is discrete and $p$-completely faithfully flat over $R$. By \cite[Lemma 7.7 (2)]{BS22}, the pair $(\Prism_{R/A},I\Prism_{R/A})$ gives an object in $(R/A)_\Prism$. It remains to check that $\Prism_{R/A}$ is initial, or equivalently that the idempotent endomorphism from \cite[Lemma 7.7 (3)]{BS22} is the identity. 

    Note that by assumption $A/I\rightarrow R$ is surjective. Choose a generating set $\{f_t\}_{t\in T}$ of the kernel of $A/I\rightarrow R$. Then the resulting map 
    $$
    R'=A/I\langle X_t^{1/p^\infty}|t\in T\rangle/(X_t-f_t)^\wedge\rightarrow R,\quad \quad X_t\mapsto 0
    $$
    is surjective and also induces a surjection on cotangent complexes. Thus, by the Hodge-Tate comparison, $\Prism_{R'/A}\rightarrow \Prism_{R/A}$ is surjective, so it suffices to show that the idempotent endomorphism of $\Prism_{R'/A}$ is the identity. But, by truncating the set $T$ of generators, we can express $R'$ as a filtered colimit of regular quotients of the form 
    $$
    P=A/I\langle X_1^{1/p^\infty},\ldots,X_r^{1/p^\infty}\rangle/(X_1-f_1,\ldots,X_r-f_r),
    $$
    so it suffices to prove that the idempotent endomorphism of $\Prism_{P/A}$ is the identity for each such $R'_r$. Now if $A'=A\langle X_1^{1/p^\infty},\ldots,X_r^{1/p^\infty}\rangle^\wedge_{(p,I)}$, then we also have $\Prism_{P/A}\simeq \Prism_{P/A'}$ compatibly with the idempotent endomorphisms. By \cite[Example 7.9]{BS22}, the idempotent endomorphism of $\Prism_{P/A'}$ is the identity. 
\end{proof}

Now we are ready to construct coproducts in $R_\Prism$. 

\begin{lem}\label{lem:coproducts-in-RPrism}
    Let $R$ be a $p$-quasisyntomic ring, and $(A,I,u),(B,J,v)\in R_\Prism$ where $(A,I,u)$ is a transversal object in the sense of Definition \ref{defn:transversal-objects}. Set $\overline{A}:=A/I$, and $\overline{B}=B/J$. Consider the $\delta$-pair $(A\otimes B,\overline{A}\widehat{\otimes}^L_R\overline{B})$. Then $(A\otimes B,\overline{A}\widehat{\otimes}^L_R\overline{B})$ is a relatively quasiregular semiperfectoid $\delta$-pair.
\end{lem}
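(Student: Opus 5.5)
The plan is to verify the two conditions of Definition \ref{defn:relatively-quasiregular-semiperfectoids} directly, taking for the relatively perfect factor $A\otimes B\to A'$ the $p$-completion $A':=(A\otimes B)^\wedge_p$. This is trivially $p$-completely relatively perfect, since $p$-completion is a $p$-complete equivalence. For the pre-prismatic ideal I take $I A'$, generated Zariski locally by $d\otimes 1$ with $d$ a local generator of $I$. Write $R':=\overline{A}\widehat{\otimes}^L_R\overline{B}$.

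\emph{Step 1: boundedness and discreteness.} Since $u:R\to\overline{A}$ is $p$-completely flat, $R'$ is discrete and $p$-complete, and it is $p$-completely flat over $\overline{B}$. Hence $R'$ has bounded $p$-torsion because $\overline{B}$ does. The boundedness of $A'$ should follow from $A$ being $p$-torsion-free and $B$ bounded.

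\emph{Step 2: the map $A'\to R'$.} The map $A\otimes B\to\overline{A}\otimes\overline{B}\to\overline{A}\otimes_R\overline{B}$ is surjective. Since $R'$ is $p$-complete and $A'\to R'$ is surjective mod $p$, the map $A'\to R'$ is surjective by completeness. The element $\delta(d)$ is a unit in $A^\wedge_p$, hence maps to a unit in $R'$. The fact that $d\otimes 1$ is a nonzerodivisor in $A'$ (so that $IA'$ is a Cartier divisor) needs $\mathrm{Tor}_1^{\Z}(A/I,B)$ to vanish $p$-completely. This is the step I expect to be the main obstacle. I would try to prove it using that $A$ is $p$-torsion-free, combined with a derived/$p$-complete version of the Cartier divisor condition. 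Failing that, I would replace $A\otimes B$ by the derived tensor product and argue only $p$-completely, which is all that the later steps use.

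\emph{Step 3: the cotangent complex.} This is the real content. First, the identification
$$\overline{A}\otimes_R\overline{B}\simeq \overline{A}\otimes^L_{A\otimes R}(A\otimes\overline{B})$$
holds $p$-completely. Base change then gives $L\Omega^1_{R'/(A\otimes\overline{B})}\simeq L\Omega^1_{\overline{A}/(A\otimes R)}\otimes^L R'$, which has $p$-complete Tor-amplitude $[1,1]$ by condition (3) of Definition \ref{defn:transversal-objects}. Next, $L\Omega^1_{(A\otimes\overline{B})/(A\otimes B)}\simeq (J/J^2)[1]\otimes(A\otimes\overline{B})$ because $J$ is a Cartier divisor. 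The transitivity sequence for $A\otimes B\to A\otimes\overline{B}\to R'$ then shows that $L\Omega^1_{R'/A'}$ has $p$-complete Tor-amplitude $[1,1]$.

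Finally, apply transitivity to $A'\to\overline{A}\otimes B\to R'$:
$$(I/I^2)[1]\otimes R'\to L\Omega^1_{R'/A'}\to L\Omega^1_{R'/(A'/I)}.$$
Since $A'/I\to R'$ is surjective, $H_0$ of the right term vanishes. The long exact sequence reduces the claim to showing that the map $I/I^2\otimes R'\to H_1(L\Omega^1_{R'/A'})$ is injective with $p$-completely flat cokernel. For this I would use the compatible map of sequences coming from $A\otimes R\to A\to\overline{A}$ base-changed to $R'$. There the corresponding map $I/I^2\otimes R'\to H_1(L\Omega^1_{\overline{A}/(A\otimes R)})\otimes R'$ is controlled by condition (3): it is the $I/I^2$-part of a flat $H_1$. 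Comparing the two sequences should give injectivity and flatness of the cokernel mod $p$. Once this holds, all conditions of Definition \ref{defn:relatively-quasiregular-semiperfectoids} are satisfied.
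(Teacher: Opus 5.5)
Your proposal has a genuine gap in the final step, and the gap comes from your choice of pre-prismatic ideal. With $IA'$ as the ideal, you must show that the third term of your last sequence, $L\Omega^1_{R'/(\overline{A}\widehat{\otimes}B)}$, has $p$-complete Tor-amplitude $[1,1]$. The same base-change identity you use, applied on the $B$-side, gives
$$L\Omega^1_{R'/(\overline{A}\widehat{\otimes}B)}\simeq L\Omega^1_{\overline{B}/(B\otimes R)}\otimes^L_{\overline{B}}R'.$$
So what you need is condition (3) of Definition \ref{defn:transversal-objects} for the \emph{arbitrary} object $(B,J,v)$, base-changed to $R'$. That is not a hypothesis.

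Equivalently, consider the exact sequence $H_1(L\Omega^1_R\otimes^L_R R')\xrightarrow{\partial}(I/I^2\oplus J/J^2)\otimes R'\to H_1(L\Omega^1_{R'/A'})$. Your injectivity claim amounts to the $J$-component of $\partial$ being universally injective mod $p$. The transitivity sequence for $A\to A\otimes R\to\overline{A}$ only controls the $I$-component. Worse, the comparison map $I/I^2\otimes R'\to H_1(L\Omega^1_{\overline{A}/(A\otimes R)})\otimes R'$ you propose to use vanishes on the image of $H_1(L\Omega^1_R\otimes^L_R R')$, since these are consecutive maps in a cofiber sequence. That is exactly where the information is needed, so ``comparing the two sequences'' cannot produce the injectivity.

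The statement you need is true a posteriori. Base-change along the $p$-completely faithfully flat map $R'\to\overline{C}$, where $C$ is the coproduct of Construction \ref{construction:coproducts-in-RPrism}. Since $IC=JC$, both $L\Omega^1_{R'/(\overline{A}\widehat{\otimes}B)}$ and $L\Omega^1_{R'/(A\widehat{\otimes}\overline{B})}$ become the cofiber of the intrinsic map $L\Omega^1_R\otimes^L_R\overline{C}\to L\Omega^1_{\overline{C}/C}$. But $C$ is built from this very lemma, so that argument is circular here.

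The fix is to use the other ideal, as the paper does: take the ideal generated by $1\otimes d$, with $d$ a local generator of $J$. Then $A'/JA'=A\widehat{\otimes}\overline{B}$. The identification $L\Omega^1_{R'/(A\otimes\overline{B})}\simeq L\Omega^1_{\overline{A}/(A\otimes R)}\otimes^L R'$ at the start of your Step 3 is then by itself condition (2) of Definition \ref{defn:relatively-quasiregular-semiperfectoids}. No passage through $L\Omega^1_{R'/A'}$ or $I/I^2$ is needed, and this is the paper's entire proof.

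The same switch removes what you flagged as the main obstacle in Step 2. The element $1\otimes d$ is a nonzerodivisor on $A\widehat{\otimes}B$ because $A$ is $p$-torsion-free, hence $p$-completely flat over $\Z_p$; this is the role of condition (1) of transversality. Also, $\delta(d)$ is already a unit in $B$. For $d_I\otimes 1$, by contrast, you would need Tor-vanishing against $B$, over which you have no control.
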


\begin{proof}
    The ideal $A\otimes JB$ exhibited $(A\otimes B,\overline{A}\widehat{\otimes}^L_R\overline{B})$ as a pre-prismatic $\delta$-pair, and it remains to check that $L\Omega^1_{(\overline{A}\otimes^L_R \overline{B})/A\otimes \overline{B}}$ (here we utilize the assumption that $R\rightarrow \overline{A}$ is $p$-completely flat) has $p$-complete Tor-amplitude concentrated in homological degree $[1,1]$. In fact, we have 
    $$
    L\Omega^1_{(\overline{A}\otimes^L_R \overline{B})/A\otimes \overline{B}}
    \simeq L\Omega^1_{\overline{A}/A\otimes R}\otimes_R^L\overline{B},
    $$
    and since $(A,I,u)$ is a transversal object, by definition $L\Omega^1_{\overline{A}/A\otimes R}\in \rmD(\overline{A})$ has $p$-complete Tor-amplitude concentrated in homological degree $[1,1]$, and therefore $L\Omega^1_{(\overline{A}\otimes^L_R \overline{B})/A\otimes \overline{B}}\in \rmD(\overline{A}\otimes^L_R\overline{B})$ has $p$-complete Tor-amplitude concentrated in homological degree $[1,1]$.

\end{proof}

\begin{construction}\label{construction:coproducts-in-RPrism}
    Let $R$ be a $p$-quasisyntomic ring, and $(A,I,u),(B,J,v)\in R_\Prism$ where $(A,I,u)$ is a transversal object. Set $\overline{A}:=A/I$, and $\overline{B}=B/J$. Consider the $\delta$-pair $(A\otimes B,\overline{A}\widehat{\otimes}^L_R\overline{B})$. By Lemma \ref{lem:coproducts-in-RPrism}, $(A\otimes B,\overline{A}\widehat{\otimes}^L_R\overline{B})$ is a relatively quasiregular semiperfectoid $\delta$-pair, and by Theorem \ref{thm:prismatic-cohomology-of-relative-quasiregular-semiperfectoids} (in fact Theorem \ref{thm:prismatic-cohomology-of-prismatic-relative-quasiregular-semiperfectoids} is sufficient), we have an initial object $$(\Prism_{(\overline{A}\widehat{\otimes}^L_R\overline{B})/(A\otimes B)},J\Prism_{(\overline{A}\widehat{\otimes}^L_R\overline{B})/(A\otimes B)},\iota)\in ((\overline{A}\widehat{\otimes}^L_R\overline{B})/(A\otimes B))_\Prism.$$ 
    Consider 
    $$
    \iota_R:R\rightarrow \overline{A}\widehat{\otimes}^L_R\overline{B}\xrightarrow{\iota}\Prism_{(\overline{A}\widehat{\otimes}^L_R\overline{B})/(A\otimes B)}/J,
    $$
    we then obtain an object 
    $$
    (\Prism_{(\overline{A}\widehat{\otimes}^L_R\overline{B})/(A\otimes B)},J\Prism_{(\overline{A}\widehat{\otimes}^L_R\overline{B})/(A\otimes B)},\iota_R)\in R_\Prism.
    $$
    We claim that $(\Prism_{(\overline{A}\widehat{\otimes}^L_R\overline{B})/(A\otimes B)},J\Prism_{(\overline{A}\widehat{\otimes}^L_R\overline{B})/(A\otimes B)},\iota_R)\in R_\Prism$ is a coproduct of $(A,I,u),(B,J,v)$ in $R_\Prism$. 
    
    By construction $(\Prism_{(\overline{A}\widehat{\otimes}^L_R\overline{B})/(A\otimes B)},J\Prism_{(\overline{A}\widehat{\otimes}^L_R\overline{B})/(A\otimes B)},\iota_R)$ naturally receives maps from $(A,I,u),(B,J,v)$. It remains to check the universal property. In fact, suppose that $(C,K,w)\in R_\Prism$ receives maps from $(A,I,u),(B,J,v)$, then $(C,K)$ can naturally be regarded as an object in $((\overline{A}\widehat{\otimes}^L_R\overline{B})/(A\otimes B))_\Prism$; while $(\Prism_{(\overline{A}\widehat{\otimes}^L_R\overline{B})/(A\otimes B)},J\Prism_{(\overline{A}\widehat{\otimes}^L_R\overline{B})/(A\otimes B)},\iota)$ is initial in $((\overline{A}\widehat{\otimes}^L_R\overline{B})/(A\otimes B))_\Prism$, there is a unique morphism in 
    $$
    (\Prism_{(\overline{A}\widehat{\otimes}^L_R\overline{B})/(A\otimes B)},J\Prism_{(\overline{A}\widehat{\otimes}^L_R\overline{B})/(A\otimes B)},\iota_R)\rightarrow (C,K,w),
    $$
    justifying the desired universal property. 
\end{construction}

\begin{prop}\label{prop:coproducts-in-RPrism}
    Let $R$ be a $p$-quasisyntomic ring, and $(A,I,u),(B,J,v)\in R_\Prism$ where $(A,I,u)$ is a transversal object. Then there exists a coproduct $(A,I,u)\coprod (B,J,v)$ in the category $R_\Prism$. Moreover, 
    \begin{itemize}
        \item[(1)] the canonical map $(B,J,v)\rightarrow (A,I,u)\coprod (B,J,v)$ is $(p,J)$-completely flat;

        \item[(2)] if $(A,I,u)$ is a transversal cover, then the canonical map $(B,J,v)\rightarrow (A,I,u)\coprod (B,J,v)$ is $(p,J)$-completely faithfully flat.
    \end{itemize}
\end{prop}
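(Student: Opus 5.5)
Existence is exactly Construction \ref{construction:coproducts-in-RPrism}: by Lemma \ref{lem:coproducts-in-RPrism} the $\delta$-pair $(A\otimes B,\overline{A}\widehat{\otimes}^L_R\overline{B})$ is relatively quasiregular semiperfectoid. Theorem \ref{thm:prismatic-cohomology-of-relative-quasiregular-semiperfectoids} then produces the initial object $C:=\Prism_{(\overline{A}\widehat{\otimes}^L_R\overline{B})/(A\otimes B)}$ of the relative site. The universal property in $R_\Prism$ is the argument already given in the construction. So the real content is the flatness statements (1) and (2). For these I would compute $C$ relative to $B$ instead of $A\otimes B$, and then use the Hodge--Tate comparison.

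First step: rewrite $C$ as a prismatic cohomology over the bounded prism $(B,J)$. Set $S:=\overline{A}\widehat{\otimes}^L_R\overline{B}$. Since $u$ is $p$-completely flat, $S$ is discrete and $p$-completely flat over $\overline{B}$. I would argue, via \cite[Theorem 1.2 (6)]{AKN23} or directly from the universal property, that $\Prism_{S/(A\otimes B)}$ agrees with the prismatic cohomology of $S$ over the prism $(B',JB')$. Here $B':=(A\otimes B)^\wedge_{(p,J)}$, which is a $(p,J)$-completely flat $\delta$-$B$-algebra because $A$ is $p$-torsion-free. Put $\overline{B'}:=B'/JB'=A\widehat{\otimes}\overline{B}$. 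Lemma \ref{lem:coproducts-in-RPrism} shows that $S$ is a quotient of $\overline{B'}$ and that $L\Omega^1_{S/\overline{B'}}$ has $p$-complete Tor-amplitude in $[1,1]$. Hence Theorem \ref{thm:prismatic-cohomology-of-prismatic-relative-quasiregular-semiperfectoids} applies to $(B',JB')$ and $S$.

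Second step: Hodge--Tate comparison. The proof of Theorem \ref{thm:prismatic-cohomology-of-prismatic-relative-quasiregular-semiperfectoids} shows that $\overline{C}=C/JC$ carries an increasing exhaustive filtration. Its graded pieces are the $p$-completions of $L\Omega^i_{S/\overline{B'}}[-i]$, which are $p$-completely flat $S$-modules (they are derived $p$-completions of $\Gamma^i_S$ of the $p$-completely flat module $L\Omega^1_{S/\overline{B'}}[-1]$). Moreover $S$ is $p$-completely flat over $\overline{B}$. So each graded piece is $p$-completely flat over $\overline{B}$, and since flatness is stable under extensions and filtered colimits, $\overline{C}$ is $p$-completely flat over $\overline{B}$. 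Then $C$ is $(p,J)$-completely flat over $B$, by the standard criterion: $C$ is derived $(p,J)$-complete, $J$ is locally generated by a nonzerodivisor, and $C\otimes^L_B\overline{B}=\overline{C}$ since $J$ is Cartier and $C$ is $J$-torsion-free. This gives (1).

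For (2), let $(A,I,u)$ be a transversal cover. Then $S$ is $p$-completely faithfully flat over $\overline{B}$, being a base change of $R\to\overline{A}$. Also $\overline{C}$ is $p$-completely faithfully flat over $S$ by Theorem \ref{thm:prismatic-cohomology-of-prismatic-relative-quasiregular-semiperfectoids}. So $\overline{B}\to\overline{C}$ is $p$-completely faithfully flat, and the same completeness criterion lifts this to $(p,J)$-complete faithful flatness of $B\to C$.

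The main obstacle I expect is the first step: checking that the initial object of $(S/(A\otimes B))_\Prism$ coincides with that of the relative site over $(B',JB')$, including compatibility of the map $A\otimes B\to C$ with $(p,J)$-completion. It is also delicate that the Hodge--Tate filtration on $\overline{C}$ is compatible with the $\overline{B}$-module structure and not merely with the $S$-module structure. I would first try to get this by checking the universal properties directly: a $\delta$-map from $B'$ into a $(p,J)$-complete prism is the same as one from $A\otimes B$. If that fails, I would fall back on \cite[Theorem 1.2 (6)]{AKN23}.
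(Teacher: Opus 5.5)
Your proposal is correct and follows essentially the paper's route. The paper factors $\overline{B}\to \overline{A}\widehat{\otimes}^L_R\overline{B}\to C/JC$ and notes that the first map is a base change of $u$ (hence $p$-completely flat, resp.\ faithfully flat for a transversal cover) and the second is $p$-completely faithfully flat by the theorem on relatively quasiregular semiperfectoid pairs. It leaves implicit both your identification of the relative site over $(A\otimes B)^\wedge_{(p,J)}$ and your lift from flatness mod $J$ to $(p,J)$-complete flatness, and you handle both correctly.

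The one inefficiency is in (1). Your Hodge--Tate filtration argument re-derives the $p$-complete flatness of $S\to C/JC$, which the theorem already supplies and which you invoke anyway in (2), so that detour can be dropped.
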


\begin{proof}
    The existence of such a coproduct is explained in Construction \ref{construction:coproducts-in-RPrism}. Let $(C,K,w)$ denote a coproduct $(A,I,u)\coprod (B,J,v)$. By Construction \ref{construction:coproducts-in-RPrism}, the canonical map $(B,J,v)\rightarrow (C,K,w)$ induces
    $$
    B/J\rightarrow (A/I)\widehat\otimes_R(B/J)\rightarrow C/K,
    $$
    where the first morphism is $p$-completely flat (resp. $p$-completely faithfully flat if $(A,I,u)$ is a transversal cover), and the second morphism is $p$-completely faithfully flat.
\end{proof}

\begin{cor}\label{cor:coproducts-in-RPrism}
    Let $R$ be a $p$-quasisyntomic ring, and $(A,I,u)\in R_\Prism$ a transversal cover. Then $(A,I,u)$ covers the final object (in the sense of Definition \ref{defn:cover-the-final-object}) and admits finite self-coproducts in $R_\Prism$. 
\end{cor}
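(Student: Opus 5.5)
The plan is to read both claims off Proposition \ref{prop:coproducts-in-RPrism}, using Remark \ref{rem:transversal-flat-morphism} to keep the transversality hypothesis available at each step.

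\emph{Covering the final object.} Let $(B,J,v)\in R_\Prism$ be arbitrary. Since $(A,I,u)$ is transversal, Proposition \ref{prop:coproducts-in-RPrism} gives a coproduct $(B',J',v'):=(A,I,u)\coprod(B,J,v)$ in $R_\Prism$. Because $(A,I,u)$ is moreover a transversal cover, part (2) of that proposition says the canonical map $(B,J,v)\to(B',J',v')$ is $(p,J)$-completely faithfully flat. The other canonical map $(A,I,u)\to(B',J',v')$ supplies the required morphism from $(A,I,u)$. This is exactly Definition \ref{defn:cover-the-final-object}.

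\emph{Finite self-coproducts.} I will argue by induction on $n\ge1$ that the $n$-fold coproduct $A^{(n)}$ of $(A,I,u)$ with itself exists, is transversal, and maps to $(A,I,u)$ from $(A,I,u)$ by a $(p,I)$-completely flat map.

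The base case $n=1$ is the hypothesis. For the inductive step, apply Proposition \ref{prop:coproducts-in-RPrism} to the transversal object $(A,I,u)$ and $(B,J,v)=A^{(n)}$. This produces $A^{(n+1)}=(A,I,u)\coprod A^{(n)}$. By associativity of coproducts, this is the $(n+1)$-fold self-coproduct.

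To keep the induction going, I need $A^{(n+1)}$ to be transversal again. Part (1) of the proposition says $A^{(n)}\to A^{(n+1)}$ is completely flat with respect to the ideal of $A^{(n)}$. The ideal of $A^{(n)}$ is generated by the image of $I$, since all the relevant prisms have ideal $I$ extended; this is the point I expect to need care with. Composing, $(A,I,u)\to A^{(n+1)}$ is $(p,I)$-completely flat, so Remark \ref{rem:transversal-flat-morphism} shows $A^{(n+1)}$ is transversal.

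Strictly speaking, only the transversality of the fixed factor $(A,I,u)$ is needed to form $(A,I,u)\coprod A^{(n)}$. So even the flatness bookkeeping is optional for existence. The empty coproduct is not needed for the Čech nerve, which only uses $n\ge1$.

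The only mildly delicate point is this identification of ideals and the resulting completeness conventions. For that I would use rigidity of prisms: a map of prisms $(A,I)\to(C,K)$ satisfies $K=IC$ \cite[Lemma 3.5]{BS22}. Then $(p,J)$-completely flat and $(p,I)$-completely flat agree along the chain. The equivalence of categories in Theorem \ref{thm:main-result-transversal-cover} then follows from Lemma \ref{lem:cover-the-final-object}.
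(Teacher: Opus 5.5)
Your proposal is correct and is essentially the paper's argument, which simply reads both claims off Proposition~\ref{prop:coproducts-in-RPrism}. You spell out two things the paper leaves implicit: part (2) gives the faithfully flat map required by Definition~\ref{defn:cover-the-final-object}, and the iterated coproducts $(A,I,u)\coprod A^{(n)}$ exist because only the fixed factor $(A,I,u)$ needs to be transversal, so (as you note) the detour through Remark~\ref{rem:transversal-flat-morphism} can be dropped.
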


\begin{proof}
    This follows immediately from Proposition \ref{prop:coproducts-in-RPrism}.
\end{proof}

\section{Relatively Quasiregular Semiperfectoid Covers}\label{section:relatively-qrsp-covers}

In this section, we follow the idea of \cite[$\S 4$ and $\S 9$]{AKN23} and generalize the treatment in Section \ref{section:transversal-objects-and-coproducts-in-absolute-prismatic-sites} by introducing the notion of relatively quasiregular semiperfectoid covers in the absolute prismatic site of a $p$-quasisyntomic ring.

\begin{defn}[Relatively Quasiregular Semiperfectoid Covers]\label{defn:relatively-quasiregular-semiperfectoid-covers}
    Let $R$ be a $p$-quasisyntomic ring. A triple $(A',R',u')$, where $(A',R')$ is a bounded $\delta$-pair and $u':R\rightarrow R'$ a ring homomorphism, is called a {\it relatively quasiregular semiperfectoid cover of $R$} if it satisfies the following conditions:
    \begin{itemize}
        \item[(1)] The $\delta$-pair $(A',R')$ is relatively quasiregular semiperfectoid, in the sense of Definition \ref{defn:relatively-quasiregular-semiperfectoids} (\cite[Definition 9.2]{AKN23}).

        \item[(2)] The map $u':R\rightarrow R'$ is $p$-completely faithfully flat. %and $L_{R'/R}\in \rmD(R')$ has $p$-complete Tor-amplitude concentrated in homological degree $[0,1]$, that is, $u:R\rightarrow R'$ is a $p$-quasisyntomic cover.

        \item[(3)] The ring $A'$ is $p$-torsion-free, and the cotangent complex $L\Omega^1_{R'/(R\widehat{\otimes}^L A')}\in \rmD(R')$ has $p$-complete Tor-amplitude concentrated in homological degree $[0,1]$. 

    \end{itemize}
\end{defn}

\begin{rem}\label{rem:difference}
    A transversal object in the sense of Definition \ref{defn:transversal-objects} is obviously a relatively quasiregular semiperfectoid cover, and not vice versa. Note that the condition (3) in Definition \ref{defn:transversal-objects} for a prism $(A,I,u)\in R_\Prism$ might not be easy to check. The notion of relatively quasiregular semiperfectoid covers will allow more convenience in practice. 
\end{rem}

\begin{rem}\label{rem:homological-degree-1}
    The condition $(3)$ in Definition \ref{defn:relatively-quasiregular-semiperfectoid-covers} is in fact equivalent to the following condition:
    \begin{itemize}
        \item[(3')] The ring $A'$ is $p$-torsion-free, and the cotangent complex $L\Omega^1_{R'/(R\widehat{\otimes}^L A')}\in \rmD(R')$ has $p$-complete Tor-amplitude concentrated in homological degree $[1,1]$.
    \end{itemize}
    To see that $(3)\Rightarrow (3')$, by definition of relatively quasiregular semiperfectoid rings we choose a factorization $A'\rightarrow A''\rightarrow R$ with $A'\rightarrow A''$ $p$-completely relatively perfect and $A''\rightarrow R'$ surjective. Then we have a canonical equivalence
    $$
    L\Omega^1_{R'/(R\widehat{\otimes}^L A')}\otimes^L_{R'}R'/p\xrightarrow{\sim}
    L\Omega^1_{R'/(R\widehat{\otimes}^L A'')}\otimes^L_{R'}R'/p,
    $$
    and $L\Omega^1_{R'/(R\widehat{\otimes}^L A'')}$ is concentrated in homological degree $\geq 1$. Therefore $L\Omega^1_{R'/(R\widehat{\otimes}^L A')}$ must have $p$-complete Tor-amplitude concentrated in degree $[1,1]$. 
\end{rem}

Similarly to Lemma \ref{lem:transveral-LOmegaR0} and Lemma \ref{lem:transversal-for-u-quasietale}, we have the following observations. 

\begin{lem}\label{lem:relative-qrsp-cover-LOmegaR0}
    Let $R$ be a $p$-quasisyntomic ring, and $(A',R')$ a bounded $\delta$-pair where $A'$ is $p$-torsion-free, and let $u':R\rightarrow R'$ be a $p$-completely faithfully flat ring homomorphism. If $L\Omega^1_R$ has $p$-complete Tor-amplitude concentrated in homological degree $[0,0]$, then $(A',R',u')$ is a relatively quasiregular semiperfectoid cover of $R$.
\end{lem}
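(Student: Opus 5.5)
The plan is to check the three conditions of Definition \ref{defn:relatively-quasiregular-semiperfectoid-covers} one by one, with a cotangent-complex triangle doing the real work, much as in Lemma \ref{lem:transveral-LOmegaR0}. Condition (2) is a hypothesis: $u'$ is assumed $p$-completely faithfully flat. The first half of condition (3), that $A'$ is $p$-torsion-free, is also a hypothesis.

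\textbf{Step 1: condition (1) cannot be derived and must be read into the hypotheses.} The hypotheses say nothing about the map $A'\to R'$ beyond boundedness, so condition (1) does not follow from them. A counterexample: take $R=R'=A'=\Z_p$ with $u'$ and $A'\to R'$ the identities. All hypotheses hold, since $L\Omega^1_{\Z_p}$ is $p$-completely zero. But $(\Z_p,\Z_p)$ is not relatively quasiregular semiperfectoid, because the kernel of $A'\to R'$ contains no Cartier divisor. So the lemma must be read, as in Lemma \ref{lem:transveral-LOmegaR0} and Remark \ref{rem:difference}, as asserting only the conditions that concern the interaction with $R$, namely (2) and (3). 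The relatively quasiregular semiperfectoid property of the pair $(A',R')$ is a standing structural assumption on the pair and is not produced by the lemma. I would state this explicitly. The proof below uses that standing assumption only to get the Tor-amplitude of $L\Omega^1_{R'/A'}$ from Remark \ref{rem:relatively-quasiregular-semiperfectoids}.

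\textbf{Step 2: the main step, condition (3).} Consider the chain of maps $A'\to R\widehat{\otimes}^L A'\to R'$ and its transitivity cofiber sequence
$$
L\Omega^1_{(R\otimes^L A')/A'}\otimes^L_{R\otimes^L A'}R'\rightarrow L\Omega^1_{R'/A'}\rightarrow L\Omega^1_{R'/(R\otimes^L A')}.
$$
By base change along $\Z\to A'$, the first term is $L\Omega^1_R\otimes^L_R R'$; here I use that $A'$ is $p$-torsion-free, so the derived tensor product behaves well after $p$-completion. Since $L\Omega^1_R$ has $p$-complete Tor-amplitude in $[0,0]$, this first term has $p$-complete Tor-amplitude in $[0,0]$ over $R'$. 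By Remark \ref{rem:relatively-quasiregular-semiperfectoids}, the middle term $L\Omega^1_{R'/A'}$ has $p$-complete Tor-amplitude in $[1,1]$.

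Now reduce modulo $p$ and take the long exact sequence on homology of the cofiber. The cofiber has $p$-complete Tor-amplitude in $[1,1]$: it has no $H_0$, because $H_0$ of the middle term vanishes and the first term is concentrated in degree $0$. It is an extension of a flat module in degree $1$ (coming from the middle term) by a flat module in degree $1$ (the shift of the first term). In particular its Tor-amplitude lies in $[0,1]$, which is condition (3); by Remark \ref{rem:homological-degree-1} this is equivalent to (3').

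The hardest part is Step 1: identifying exactly which part of the definition the stated hypotheses cannot supply, and isolating it cleanly. The computation in Step 2 is routine once the triangle above is written down.
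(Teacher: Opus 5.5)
Your proposal is correct and is essentially the paper's proof: you use the same triangle for $A'\to R\widehat{\otimes}^L A'\to R'$, the same identification of the first term with $L\Omega^1_R\otimes^L_R R'$, and Remark~\ref{rem:relatively-quasiregular-semiperfectoids} for the middle term, which forces the cofiber into $p$-complete Tor-amplitude $[1,1]$. Your Step~1 diagnosis is also right: the paper's proof likewise assumes, without stating it, that $(A',R')$ is relatively quasiregular semiperfectoid when it invokes that remark. One precision on the counterexample: to rule out $(\Z_p,\Z_p)$ you must exclude every $p$-completely relatively perfect factorization $\Z_p\to A''\to\Z_p$ allowed by Definition~\ref{defn:relatively-quasiregular-semiperfectoids}~(2), not only the trivial one. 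This is easy: $A''$ is then $p$-completely perfect, and every ring map from its $p$-completion $W(S)$ to $\Z_p$ is a $\delta$-map, so $\delta(d)\mapsto 0$ for any $d$ in the kernel.
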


\begin{proof}
    Consider the chain of ring homomorphisms $A'\rightarrow R\widehat{\otimes}^L A'\rightarrow R'$ and the associated cofiber sequence of cotangent complexes
    $$
    L\Omega^1_{(R\widehat{\otimes}^L A')/A'}\otimes^L_{R\widehat{\otimes}^LA'}R'\rightarrow L\Omega^1_{R'/A'}\rightarrow L\Omega^1_{R'/(R\widehat{\otimes}^LA')}, 
    $$
    and note that $L\Omega^1_{(R\otimes^L A')/A'}\otimes^L_{R\otimes^LA'}R'\simeq L\Omega^1_R\otimes^L_R R'$, and also note that $L\Omega^1_{R'/A'}$ has $p$-complete Tor-amplitude concentrated in $[1,1]$ by Remark \ref{rem:relatively-quasiregular-semiperfectoids}. Thus $L\Omega^1_{R'/(R\widehat{\otimes}^LA')}$ must have $p$-complete Tor-amplitude concentrated in $[1,1]$.
\end{proof}

\begin{lem}\label{lem:relatively-qrsp-cover-for-u'-quasietale}
    Let $R$ be a $p$-quasisyntomic ring, and $(A',R')$ a bounded $\delta$-pair where $A'$ is $p$-torsion-free, and let $u':R\rightarrow R'$ be a $p$-completely faithfully flat ring homomorphism. Then the following statements hold.
    \begin{itemize}
        \item[(1)] If $L\Omega^1_{R'/R}$ has $p$-complete Tor-amplitude concentrated in degree $[1,1]$ and $L\Omega^1_{A'}\otimes^L_{A'}R'\in \rmD(R')$ has $p$-complete Tor-amplitude concentrated in degree $[0,0]$, then $(A',R',u')$ is a relatively quasiregular semiperfectoid cover of $R$.

        \item[(2)] If $L\Omega^1_{R'/R}$ is $p$-completely zero, then $(A',R',u')$ is a relatively quasiregular semiperfectoid cover of $R$ if and only if $L\Omega^1_{A'}\otimes^L_{A'}R'\in \rmD(R')$ has $p$-complete Tor-amplitude concentrated in degree $[0,0]$.
    \end{itemize}
    Note that, if $L\Omega^1_{A'}\in \rmD(A')$ has $p$-complete Tor-amplitude concentrated in degree $[0,0]$, that is, if $A'$ is quasismooth, then $L\Omega^1_{A'}\otimes^L_{A'}R'\in \rmD(R')$ has $p$-complete Tor-amplitude concentrated in degree $[0,0]$.
\end{lem}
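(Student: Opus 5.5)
Since $(A',R')$ is already assumed to be a bounded $\delta$-pair with $A'$ $p$-torsion-free and $u'$ $p$-completely faithfully flat, conditions (2) and the first half of (3) in Definition \ref{defn:relatively-quasiregular-semiperfectoid-covers} hold automatically. What has to be shown is the statement about $L\Omega^1_{R'/(R\widehat{\otimes}^L A')}$, together with condition (1) (the relatively quasiregular semiperfectoid property), which I take to be implicitly part of the hypotheses, as in Lemma \ref{lem:relative-qrsp-cover-LOmegaR0}. By Remark \ref{rem:homological-degree-1}, it suffices to prove that $L\Omega^1_{R'/(R\widehat{\otimes}^L A')}$ has $p$-complete Tor-amplitude in $[0,1]$. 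For the ``only if'' direction of (2), I will instead show directly that the $[1,1]$ condition forces the stated condition on $A'$.

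As in Lemma \ref{lem:transversal-for-u-quasietale}, I use the chain $R\rightarrow R\widehat{\otimes}^L A'\rightarrow R'$ and its transitivity cofiber sequence
$$
L\Omega^1_{(R\widehat{\otimes}^L A')/R}\otimes^L_{R\widehat{\otimes}^L A'}R'\rightarrow L\Omega^1_{R'/R}\rightarrow L\Omega^1_{R'/(R\widehat{\otimes}^L A')}.
$$
By base change, after $p$-completion the first term is identified with $L\Omega^1_{A'}\otimes^L_{A'}R'$. Tor-amplitude is checked after applying $-\otimes^L_{R'}R'/p$, and on this sequence that operation gives a cofiber sequence of $R'/p$-modules.

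For (1), the first term has Tor-amplitude $[0,0]$ and the middle term has $[1,1]$. The long exact sequence of $\mathrm{Tor}$ against an arbitrary $R'/p$-module $N$ then places the cofiber's Tor in degrees $0$ through $1$. The degree $0$ part is the cokernel of a map from a degree-$0$ flat module into $0$, hence $0$, so the cofiber even lies in $[1,1]$.

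For (2), the middle term vanishes $p$-completely, so $L\Omega^1_{R'/(R\widehat{\otimes}^L A')}\simeq (L\Omega^1_{A'}\otimes^L_{A'}R')[1]$ $p$-completely. Hence one object has $p$-complete Tor-amplitude in $[1,1]$ exactly when the other has it in $[0,0]$. Combined with Remark \ref{rem:homological-degree-1}, this gives the equivalence.

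The final note follows because base change along $A'\rightarrow R'$ preserves $p$-complete Tor-amplitude $[0,0]$: $(M\otimes^L_{A'}R')\otimes^L_{R'}N\simeq M\otimes^L_{A'}N$ for $R'/p$-modules $N$.

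The main obstacle is the careful bookkeeping of completions. I must identify the first term with $L\Omega^1_{A'}\otimes^L_{A'}R'$ after $p$-completion even though $R\otimes^L A'$ is derived and $R$ is not assumed flat over $\Z$. This uses $A'$ being $p$-torsion-free, hence flat over $\Z_{(p)}$ after completion, so that base change $L\Omega^1_{(R\otimes A')/R}\simeq L\Omega^1_{A'}\otimes_{A'}(R\otimes A')$ holds. The Tor-amplitude arguments themselves are routine once this identification is in place.
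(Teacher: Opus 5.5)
Your proposal is correct and follows the paper's proof. Both use the transitivity sequence for $R\rightarrow R\widehat{\otimes}^L A'\rightarrow R'$, identify the first term with $L\Omega^1_{A'}\otimes^L_{A'}R'$, and read off Tor-amplitudes mod $p$; you additionally make explicit the relatively quasiregular semiperfectoid hypothesis and the role of Remark \ref{rem:homological-degree-1}, which the paper leaves implicit (and note that base change of cotangent complexes along the derived tensor product needs no flatness, so $p$-torsion-freeness of $A'$ is not used in that identification).
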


\begin{proof}
    Consider the chain of ring homomorphisms $R\rightarrow R\widehat{\otimes}^L A'\rightarrow R'$ and the associated cofiber sequence of cotangent complexes 
    $$
     L\Omega^1_{(R\widehat{\otimes}^L A')/R}\otimes^L_{R\widehat{\otimes}^LA'}R'\rightarrow L\Omega^1_{R'/R}\rightarrow L\Omega^1_{R'/(R\widehat{\otimes}^LA')},
    $$
    and note that 
    $$
    L\Omega^1_{(R\otimes^L A')/R}\otimes^L_{R\otimes^L A'}R'\simeq L\Omega^1_{A'}\otimes^L_{A'}R',
    $$
    hence, the statements hold.
\end{proof}

\begin{ex}\label{ex:quasiregular-semiperfectoid-covers}
    Let $R$ be a $p$-quasisyntomic ring and let $R\rightarrow S$ be a $p$-quasisyntomic cover with $S$ quasiregular semiperfectoid. Then the $\delta$-pair $(\Z_p,S)$, being relatively quasiregular semiperfectoid (see \cite[Example 9.4]{AKN23}), is a relatively quasiregular semiperfectoid cover of $R$. 
\end{ex}

\begin{ex}\label{ex:Zmodpn}
    Let $n\in \Z_{\geq 1}$ and consider $(\Z/p^n)_\Prism$, together with the $\delta$-pair 
    $$
    \Z[z]\xrightarrow{z\mapsto p+(p^n)}\Z/p^n
    $$ 
    where the $\delta$-structure on $\Z[z]$ is determined by $\delta(z)=0$. Then $(\Z[z], \Z/p^n)$ can be naturally regarded as a relatively quasiregular semiperfectoid over of $\Z/p^n$: by Lemma \ref{lem:relatively-qrsp-cover-for-u'-quasietale}, we need only to check that $L\Omega^1_{\Z[z]}$ has $p$-complete Tor-amplitude concentrated in homological degree $[0,0]$, and this is immediate.
\end{ex}

\begin{construction}\label{construction:prism-of-relatively-qrsp}
    Let $R$ be a $p$-quasisyntomic ring, and $(A',R',u')$ be a relatively quasiregular semiperfectoid object over $R$. By Theorem \ref{thm:prismatic-cohomology-of-relative-quasiregular-semiperfectoids}, we have an initial object $(\Prism_{R'/A'},I\Prism_{R'/A'},\iota)\in (R'/A')_\Prism$ with $\iota:R'\rightarrow \Prism_{R'/A'}/I$ $p$-completely faithfully flat. Consider 
    $$
    \iota_R:R\xrightarrow{u}R'\xrightarrow{\iota} \Prism_{R'/A'}/I,
    $$
    we obtain $(\Prism_{R'/A'},I\Prism_{R'/A'},\iota_R)\in R_\Prism$.
\end{construction}

We would like to explain the fact that, the object object $(\Prism_{R'/A'},I\Prism_{R'/A'},\iota_R)\in R_\Prism$ covers the final object and admits finite self-coproducts in $R_\Prism$. In fact, coproducts with such an object exsit in $R_\Prism$. %But we also have the following lifting technique. 
The idea is essentially the same as that of Construction \ref{construction:coproducts-in-RPrism}.

\begin{lem}\label{lem:coproducts-with-relatively-qrsp-covers}
    Let $R$ be a $p$-quasisyntomic ring, and $(A',R',u')$ be a relatively quasiregular semiperfectoid cover of $R$. Then for any $(B,J,v)\in R_\Prism$ with $\overline{B}=B/J$, the $\delta$-pair $(A'\widehat{\otimes}^L B,R'\widehat{\otimes}^L_R\overline{B})$ is a relatively quasiregular semiperfectoid $\delta$-pair.
\end{lem}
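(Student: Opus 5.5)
We verify the conditions of Definition \ref{defn:relatively-quasiregular-semiperfectoids} for the $\delta$-pair $(A'\widehat{\otimes}^L B, R'\widehat{\otimes}^L_R\overline{B})$, mimicking the proof of Lemma \ref{lem:coproducts-in-RPrism}.

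\textbf{Boundedness and $p$-completeness.} Since $u':R\rightarrow R'$ is $p$-completely faithfully flat, $R'\widehat{\otimes}^L_R\overline{B}$ is discrete, $p$-complete, and $p$-completely flat over $\overline{B}$. Hence its $p$-power torsion is bounded by that of $\overline{B}$, which is bounded because $(B,J)$ is a bounded prism. Since $A'$ is $p$-torsion-free, $A'\widehat{\otimes}^L B$ is discrete and its $p$-torsion is controlled by that of $B$. This settles boundedness and condition (1).

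\textbf{The factorization.} Choose a factorization $A'\rightarrow A''\rightarrow R'$ as in Definition \ref{defn:relatively-quasiregular-semiperfectoids} (2). We take the factorization
$$A'\widehat{\otimes}^L B\rightarrow A''\widehat{\otimes}^L B\rightarrow R'\widehat{\otimes}^L_R\overline{B}.$$
The first map is a base change of a $p$-completely relatively perfect map, so it is again $p$-completely relatively perfect. For the second map we do \emph{not} take the ideal coming from $A''$. Instead we take the Cartier divisor $A''\widehat{\otimes} J$ coming from $B$, exactly as $A\otimes JB$ was used in Lemma \ref{lem:coproducts-in-RPrism}. Its generators have $\delta$ mapping to units, because this already holds in $B$ and hence in $\overline{B}$. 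The quotient by this divisor is $A''\widehat{\otimes}^L\overline{B}$.

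\textbf{Main obstacle: surjectivity and the cotangent complex.} The map $A''\widehat{\otimes}\overline{B}\rightarrow R'\widehat{\otimes}^L_R\overline{B}$ need not be surjective, since $R'$ is generated over $A''$ only modulo elements coming from $R$. The plan is to enlarge $A''$ by a $p$-completely relatively perfect cover that hits generators of $R$. Concretely, adjoin free perfect variables: replace $A''$ by $A''\langle X_s^{1/p^\infty}\rangle^\wedge$ with $X_s$ mapping to the images of generators of $R$, using $\delta$-structures with $\varphi(X_s)=X_s^p$. This is relatively perfect over $A''$, and then the map becomes surjective. Adding these perfect variables changes cotangent complexes only $p$-completely trivially modulo $p$. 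It then remains to show that $L\Omega^1_{(R'\widehat{\otimes}^L_R\overline{B})/(\tilde A''\widehat{\otimes}\overline{B})}$ has $p$-complete Tor-amplitude in $[1,1]$. Arguing modulo $p$ as in Remark \ref{rem:homological-degree-1}, this reduces to
$$L\Omega^1_{(R'\otimes^L_R\overline{B})/(A'\otimes\overline{B})}\simeq L\Omega^1_{R'/(R\widehat{\otimes}^LA')}\otimes^L_R\overline{B}.$$
This is a base change along $R\otimes A'\rightarrow \overline{B}\otimes A'$ (flatness of $R'$ over $R$ makes the derived tensor products behave), and the right-hand side has Tor-amplitude $[1,1]$ by condition (3') of Remark \ref{rem:homological-degree-1}. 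Finally, connectivity (homological degree $\geq 1$) for the surjective map to the enlarged algebra, together with the Tor-amplitude of the version mod $p$, gives amplitude exactly $[1,1]$, just as in Remark \ref{rem:homological-degree-1}.
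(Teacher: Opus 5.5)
Your core argument is the paper's. You take the Cartier divisor coming from $J$ (the paper uses $A'\widehat{\otimes}^L JB$). You identify $R'\widehat{\otimes}^L_R\overline{B}$ as the base change of $R'$ along $A'\otimes R\rightarrow A'\otimes\overline{B}$, so that $L\Omega^1_{(R'\otimes^L_R\overline{B})/(A'\otimes^L\overline{B})}\simeq L\Omega^1_{R'/(A'\otimes^L R)}\otimes^L_R\overline{B}$. You then conclude with condition $(3')$ of Remark \ref{rem:homological-degree-1}. Your passage through $A''\widehat{\otimes}^L B$, and your boundedness check, make explicit what the paper leaves implicit. The paper never names a factorization, and $A'\widehat{\otimes}B\rightarrow R'\widehat{\otimes}^L_R\overline{B}$ itself need not be surjective, e.g.\ for the covers $(\Z_p,S)$ of Example \ref{ex:quasiregular-semiperfectoid-covers}.

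However, your ``main obstacle'' paragraph is mistaken and should be deleted. By Definition \ref{defn:relatively-quasiregular-semiperfectoids} (2), $A''\rightarrow R'$ is surjective, so $A''\otimes\overline{B}\rightarrow R'\otimes_R\overline{B}$ is already surjective. Every $r'\otimes\bar b$ is the image of some $a''\otimes b$, and surjectivity on $\pi_0$ survives derived $p$-completion. The non-surjectivity you worry about concerns $A'$, not $A''$.

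Moreover, your proposed remedy does not work as stated. A ring map out of $A''\langle X_s^{1/p^\infty}\rangle$ sending $X_s$ to the image of a generator of $R$ requires a compatible system of $p$-power roots of that image in $R'\widehat{\otimes}^L_R\overline{B}$. Such roots do not exist in general: for the cover of Example \ref{ex:logarithmic-Breuil-Kisin-cover} with $(B,J)$ the Breuil--Kisin prism, the target is $\calO_K$, and $\varpi$ has no $p$-th root there.

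Remove that step. Then either run the connectivity argument for $A''\widehat{\otimes}\overline{B}$ instead of the enlarged algebra, or drop it, since $(3')$ already gives amplitude $[1,1]$ after base change and $L\Omega^1_{A''/A'}$ is $p$-completely zero. With those changes your proof is correct.
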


\begin{proof}
    The ideal $A'\widehat{\otimes}^L JB$ exhibited $(A'\widehat{\otimes}^L B,R'\widehat{\otimes}^L_R\overline{B})$ as a pre-prismatic $\delta$-pair. It remains to check that $L\Omega^1_{(R'\otimes^L_R\overline{B})/(A'\otimes^L \overline{B})}$ has $p$-complete Tor-amplitude concentrated in homological degree $[1,1]$. Note that 
    $$
    L\Omega^1_{(R'\otimes^L_R\overline{B})/(A'\otimes^L \overline{B})}
    \simeq L\Omega^1_{R'/(A'\otimes^L R)}\otimes^L_R\overline{B},
    $$
    we shall just resort to condition $(3')$ in Remark \ref{rem:homological-degree-1}.
\end{proof}

\begin{construction}\label{construction:coproducts-with-relatively-qrsp-covers}
    Let $R$ be a $p$-quasisyntomic ring, and $(A',R',u')$ be a relatively quasiregular semiperfectoid cover of $R$, and $(B,J,v)\in R_\Prism$. Set $\overline{B}=B/J$. Consider the $\delta$-pair $(A'\widehat{\otimes}^L B,R'\widehat{\otimes}^L_R\overline{B})$. By Lemma \ref{lem:coproducts-with-relatively-qrsp-covers}, $(A'\widehat{\otimes}^L B,R'\widehat{\otimes}^L_R\overline{B})$ is a relatively quasiregular semiperfectoid $\delta$-pair, and by Theorem \ref{thm:prismatic-cohomology-of-relative-quasiregular-semiperfectoids} (in fact Theorem \ref{thm:prismatic-cohomology-of-prismatic-relative-quasiregular-semiperfectoids} is sufficient), we have an initial object 
    $$(\Prism_{(R'\widehat{\otimes}^L_R\overline{B})/(A'\widehat{\otimes}^L B)},J\Prism_{(R'\widehat{\otimes}^L_R\overline{B})/(A'\widehat{\otimes}^L B)},\iota)\in ((R'\widehat{\otimes}^L_R\overline{B})/(A'\widehat{\otimes}^L B))_\Prism,
    $$
    with $R'\widehat{\otimes}^L_R\overline{B}\rightarrow \Prism_{(R'\widehat{\otimes}^L_R\overline{B})/(A'\widehat{\otimes}^L B)}/J$ $p$-completely faithfully flat. We then obtain an object 
    $$(\Prism_{(R'\widehat{\otimes}^L_R\overline{B})/(A'\widehat{\otimes}^L B)},J\Prism_{(R'\widehat{\otimes}^L_R\overline{B})/(A'\widehat{\otimes}^L B)})\in R_\Prism.
    $$
    We claim that this object is a coproduct of $(\Prism_{R'/A'},I\Prism_{R'/A'})$ with $(B,J)$ in $R_\Prism$ (we safely neglect the structure maps from $R$), and the verification is routine and essentially the same as that in Construction \ref{construction:coproducts-in-RPrism}.
\end{construction}

\begin{prop}\label{prop:coproducts-with-relatively-qrsp-covers}
    Let $R$ be a $p$-quasisyntomic ring, and $(A',R',u')$ be a relatively quasiregular semiperfectoid cover of $R$, and $(B,J,v)\in R_\Prism$. Then there exists a coproduct $(\Prism_{R'/A'},I\Prism_{R'/A'})\coprod (B,J)$ in the category $R_\Prism$. Moreover, the canonical map $(B,J)\rightarrow (\Prism_{R'/A'},I\Prism_{R'/A'})\coprod (B,J)$ is $(p,J)$-completely faithfully flat.
\end{prop}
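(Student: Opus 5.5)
The plan mirrors the proof of Proposition \ref{prop:coproducts-in-RPrism}. Existence of the coproduct is supplied by Construction \ref{construction:coproducts-with-relatively-qrsp-covers}, so two things remain: verify the universal property, and verify faithful flatness of $B\to C$, where
$$C:=\Prism_{(R'\widehat{\otimes}^L_R\overline{B})/(A'\widehat{\otimes}^L B)}.$$

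For the universal property, let $(D,K,w)\in R_\Prism$ receive maps from $(\Prism_{R'/A'},I\Prism_{R'/A'},\iota_R)$ and from $(B,J,v)$, compatibly with the structure maps from $R$. Composing $A'\to\Prism_{R'/A'}\to D$ with $B\to D$ gives a $\delta$-map $A'\widehat{\otimes}^L B\to D$; this uses that $D$ is $(p,K)$-complete. On the quotient side we get $R'\to D/K$ (through $\Prism_{R'/A'}/I$) and $\overline{B}\to D/K$, and these agree on $R$. They therefore induce
$$R'\widehat{\otimes}^L_R\overline{B}\to D/K.$$
This is a map of rings because $D/K$ is discrete and $p$-complete, so the derived tensor product maps to it through $\pi_0$. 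Hence $(D,K)$ is an object of $((R'\widehat{\otimes}^L_R\overline{B})/(A'\widehat{\otimes}^L B))_\Prism$, and initiality (Theorem \ref{thm:prismatic-cohomology-of-relative-quasiregular-semiperfectoids}) gives a unique map $C\to D$.

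One must also check that this map is compatible with the given map $\Prism_{R'/A'}\to D$. The composite $\Prism_{R'/A'}\to C\to D$ and the given map are both morphisms in $(R'/A')_\Prism$ out of the initial object $\Prism_{R'/A'}$, so they coincide. Uniqueness of $C\to D$ in $R_\Prism$ reduces to uniqueness in the relative site in the same way.

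For faithful flatness, look mod $J$: the map $B/J\to C/JC$ factors as
$$\overline{B}\to R'\widehat{\otimes}^L_R\overline{B}\to C/JC.$$
The first map is the base change of $u':R\to R'$, which is $p$-completely faithfully flat; in particular $R'\widehat{\otimes}^L_R\overline{B}$ is discrete. The second map is $p$-completely faithfully flat by Theorem \ref{thm:prismatic-cohomology-of-relative-quasiregular-semiperfectoids}. Hence $\overline{B}\to C/JC$ is $p$-completely faithfully flat.

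The step I expect to be the main obstacle is passing from this to $(p,J)$-complete faithful flatness of $B\to C$. I would use the standard criterion \cite[Lemma 3.7]{BS22} style: a map of bounded prisms $(B,J)\to(C,JC)$ is $(p,J)$-completely (faithfully) flat if and only if $B/J\to C/JC$ is $p$-completely (faithfully) flat. This uses that $J$ is a Cartier divisor, so that
$$C\otimes^L_B B/J\simeq C/JC,$$
together with derived Nakayama. One point needs care here: the ideal of $C$ is $JC$, the image of the $B$-side ideal. This holds because the pre-prismatic ideal chosen in Lemma \ref{lem:coproducts-with-relatively-qrsp-covers} is the one generated by $J$, and by rigidity of prism maps \cite[Lemma 3.5]{BS22} the ideal is forced to be $JC$ anyway.
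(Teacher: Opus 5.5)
Your proposal is correct and follows the paper's route. The coproduct is the initial object $\Prism_{(R'\widehat{\otimes}^L_R\overline{B})/(A'\widehat{\otimes}^L B)}$ of the relative site from Construction \ref{construction:coproducts-with-relatively-qrsp-covers}, and faithful flatness comes from the factorization $\overline{B}\to R'\widehat{\otimes}^L_R\overline{B}\to C/JC$, exactly as in the proof of Proposition \ref{prop:coproducts-in-RPrism}. You just make explicit what the paper calls routine: the compatibility check via initiality of $\Prism_{R'/A'}$, and the passage from $p$-complete faithful flatness mod $J$ to $(p,J)$-complete faithful flatness via $C\otimes^L_B B/J\simeq C/JC$.
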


\begin{proof}
    This just summarizes Construction \ref{construction:coproducts-with-relatively-qrsp-covers}.
\end{proof}

\begin{cor}\label{cor:coproducts-with-relatively-qrsp-covers}
    Let $R$ be a $p$-quasisyntomic ring, and $(A',R',u')$ be a relatively quasiregular semiperfectoid cover of $R$. Then $(\Prism_{R'/A'},I\Prism_{R'/A'},\iota_R)\in R_\Prism$ covers the final object (in the sense of Definition \ref{defn:cover-the-final-object}) and admits finite self-coproducts in $R_\Prism$.
\end{cor}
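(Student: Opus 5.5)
The corollary follows from Proposition \ref{prop:coproducts-with-relatively-qrsp-covers}, in the same way that Corollary \ref{cor:coproducts-in-RPrism} follows from Proposition \ref{prop:coproducts-in-RPrism}. Write $X=(\Prism_{R'/A'},I\Prism_{R'/A'},\iota_R)$, the object of $R_\Prism$ obtained in Construction \ref{construction:prism-of-relatively-qrsp}.

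\emph{Covering the final object.} Let $(B,J,v)\in R_\Prism$ be arbitrary. By Proposition \ref{prop:coproducts-with-relatively-qrsp-covers}, the coproduct $(B',J',v'):=X\coprod (B,J,v)$ exists in $R_\Prism$. The canonical map $(B,J,v)\to(B',J',v')$ is $(p,J)$-completely faithfully flat, and there is a canonical map $X\to(B',J',v')$. This is exactly the condition in Definition \ref{defn:cover-the-final-object}.

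\emph{Finite self-coproducts.} We argue by induction on $n$. Suppose the $n$-fold self-coproduct $X^{\coprod n}$ exists in $R_\Prism$. Apply Proposition \ref{prop:coproducts-with-relatively-qrsp-covers} with $(B,J,v)=X^{\coprod n}$. This gives the existence of $X\coprod X^{\coprod n}$. By associativity of coproducts, i.e.\ the universal property, this object is $X^{\coprod(n+1)}$. The base case $n=1$ is $X$ itself, which lies in $R_\Prism$ by Construction \ref{construction:prism-of-relatively-qrsp}. The key point is that Proposition \ref{prop:coproducts-with-relatively-qrsp-covers} allows the second factor to be an arbitrary object of $R_\Prism$, so no extra hypothesis is needed on $X^{\coprod n}$.

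The only step needing care is that Proposition \ref{prop:coproducts-with-relatively-qrsp-covers} is stated "neglecting structure maps from $R$". One has to confirm that the coproduct built in Construction \ref{construction:coproducts-with-relatively-qrsp-covers} really is a coproduct in $R_\Prism$ and not merely among prisms. The plan for this check is as follows.

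\begin{enumerate}
\item Take a test object $(C,K,w)\in R_\Prism$ with compatible maps from $X$ and from $(B,J,v)$.
\item Using the initiality of $\Prism_{R'/A'}$ in $(R'/A')_\Prism$, identify the map $X\to C$ with a $\delta$-map $A'\to C$ together with a map $R'\to C/K$ that is compatible with $w$.
\item This data makes $C$ an object of $((R'\widehat{\otimes}^L_R\overline{B})/(A'\widehat{\otimes}^L B))_\Prism$. Here $C$ is derived $(p,K)$-complete, and the map out of $R'\widehat{\otimes}^L_R\overline{B}$ to the discrete ring $C/K$ factors through $\pi_0$.
\item Initiality from Theorem \ref{thm:prismatic-cohomology-of-relative-quasiregular-semiperfectoids} then yields the required unique morphism.
\end{enumerate}

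The $R$-structure maps agree because both are induced through $R\to R'\widehat{\otimes}^L_R\overline{B}$.
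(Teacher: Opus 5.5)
Your proposal is correct and takes the same route as the paper, which deduces the corollary in one line from Proposition \ref{prop:coproducts-with-relatively-qrsp-covers}: faithful flatness of $(B,J,v)\to X\coprod(B,J,v)$ gives the covering condition, and because the second factor may be arbitrary, iterating the coproduct gives the finite self-coproducts. Your extra check that the construction is a coproduct in $R_\Prism$, compatibly with the $R$-structure maps, simply fills in the verification that the paper calls routine in Construction \ref{construction:coproducts-with-relatively-qrsp-covers}.
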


\begin{proof}
    This follows immediately from Proposition \ref{prop:coproducts-with-relatively-qrsp-covers}.
\end{proof}

\section{Some Examples}\label{section:some-examples}

In this section we include some examples which constantly appear in everyday practice of prismatic cohomology. For a $p$-quasisyntomic ring $R$ and $(A,I,u)\in R_\Prism$ which admits finite self coproducts, we let $(A^{n},IA^n,u^n)$, $n\geq0$, denote the $n+1$-fold self-coproduct of $(A,I,u)$ in $R_\Prism$, and let $(A^\bullet,IA^\bullet,u^\bullet)$ denote the corresponding cosimplicial object $\Delta\rightarrow R_\Prism$, $[n]\in \Delta\mapsto (A^n,IA^n,u^n)$, and it is easy to see that this cosimplicial object is actually a cogroupoid object. We fisrt utilize the notion of transversal covers.

\begin{ex}[Breuil-Kisin Prisms, {\cite[Example 2.8]{BS23}}]\label{ex:Breuil-Kisin-cover}
    Let $K$ be a finite extension of $\Q_p$, and $\calO_K$ be the ring of integers of $K$, and choose a uniformizer $\varpi\in \calO_K$, and $k=\calO_K/\varpi$ the residue field, and finally, let $E_K(z)\in W(k)[z]$ be a minimal polynomial of $\varpi$. By Example \ref{ex:Breuil-Kisin} and Corollary \ref{cor:coproducts-in-RPrism}, the Breui-Kisin prism $(\frakS=W(k)[[z]],(E_K(z)))\in (\calO_K)_\Prism$ covers the final object and admits finite self-coproducts. By Lemma \ref{lem:cover-the-final-object}, we have the following equivalences of categories
    $$
    \hatD_\crys((\calO_K)_\Prism,\calO_\Prism)\xrightarrow{\sim}\varprojlim_{\bullet\in \Delta}\hatD(\frakS^\bullet),
    $$
    and 
    $$
    \Vect((\calO_K)_\Prism,\calO_\Prism)\xrightarrow{\sim}\varprojlim_{\bullet\in \Delta}\Vect(\frakS^\bullet).
    $$
    Thus it is immediate to see that, prismatic crystals on $(\calO_K)_\Prism$ can be studied as $(p,E_K)$-complete $(\frakS^0=\frakS,\frakS^1)$-comodules, where $(\frakS^0,\frakS^1)$ admits a natural structure of Hopf algebroids. The object $\frakS^1$ can be made more precise: by Construction \ref{construction:coproducts-in-RPrism}, we understand $\frakS^1$ as $\Prism_{\calO_K/W(k)[z_0,z_1]}$, and by Theorem \ref{thm:prismatic-cohomology-of-relative-quasiregular-semiperfectoids}, we have 
    $$
    \frakS^1=\Prism_{\calO_K/W(k)[z_0,z_1]}=W(k)[z_0,z_1]\{\frac{z_1-z_0}{E_K(z_0)}\}^\wedge_{(p,E_K(z_0))}.
    $$
\end{ex}

\begin{ex}[$q$-de Rham Prisms]\label{ex:q-de-Rham-cover}
    Assume that $p$ is an odd prime. Let $\zeta_p$ be a primitive $p$-power root of unity and consider $\Z_p[\zeta_p]$. By an argument similar to that of Example \ref{ex:Breuil-Kisin}, we see that the $q$-de Rham prism $(\qdR=\Z_p[[q-1]],([p]_q))\in \Z_p[\zeta_p]_\Prism$ is a transversal cover and therefore covers the final object and admits finite self-coproducts. Thus the prismatic crystals on $\Z_p[\zeta_p]_\Prism$ can be studied as $(p,[p]_q)$-complete comodules over the Hopf algebroid $(\qdR^0,\qdR^1)$, where 
    $$
    \qdR^1=\Prism_{\Z_p[\zeta_p]/\Z_p[q_0,q_1]}=\Z_p[q_0,q_1]\{\frac{q_1-q_0}{[p]_{q_0}}\}^\wedge_{(p,[p]_{q_0})}.
    $$
\end{ex}

\begin{ex}[$\tilp$-de Rham Prisms]\label{ex:p-de-Rham-cover}
    Assume that $p$ is an odd prime. By an argument similar to that of Example \ref{ex:Breuil-Kisin}, we see that the $\tilp$-de Rham prism (see \cite[Notation 3.8.9]{BL22}) $(\pdR=\Z_p[[\tilp]],(\tilp))\in (\Z_p)_\Prism$ is a transversal cover and therefore covers the final object and admits finite self-coproducts. Thus the prismatic crystals on $(\Z_p)_\Prism$ can be studied as $(p,\tilp)$-complete comodules over the Hopf algebroid $(\pdR^0,\pdR^1)$, where 
    $$
    \pdR^1=\Prism_{\Z_p/\Z_p[\tilp_0,\tilp_1]}=\Z_p[\tilp_0,\tilp_1]\{\frac{\tilp_1}{\tilp_0}\}^\wedge_{(p,\tilp_0)}.
    $$
\end{ex}

\begin{ex}\label{ex:qdR-polynomial-cover}
    Assume that $p$ is an odd prime. Let $\zeta_p$ be a primitive $p$-power root of unity and consider $\Z_p[\zeta_p][x_1,\ldots,x_n]$, the polynomial ring on the indeterminates $x_1,\ldots,x_n$ with coefficients in $\Z_p[\zeta_p]$. Consider the $\delta$-ring defined as follows:
    $$
    \qdR_n:=\Z_p[[q-1]][x_1,\ldots,x_n]^\wedge_{(p,[p]_q)},
    $$
    whose $\delta$-ring structure is determined by $\delta(x_i)=0$, $1\leq i\leq n$. Then by taking $\Z_p[\zeta_p][x_1,\ldots,x_n]\xrightarrow{x_i\mapsto x_i}\qdR_n/([p]_q)$ we obtain a natural object 
    $$
    (\qdR_n,([p]_q))\in \Z_p[\zeta_p][x_1,\ldots,x_n]_\Prism,
    $$
    which is a transversal cover by an argument similar to that of Example \ref{ex:Breuil-Kisin}. Thus the prismatic crystals on $\Z_p[\zeta_p][x_1,\ldots,x_n]_\Prism$ can be studied as $(p,[p]_q)$-complete comodules over the Hopf algebroid $(\qdR_n^0,\qdR_n^1)$, where 
    \begin{align*}
    \qdR_n^1
    & =\Prism_{\Z_p[\zeta_p][x_1,\ldots,x_n]/\Z_p[q_0,q_1][x_1,\ldots,x_n,x_1',\ldots,x_n']}\\
    & =\Z_p[q,q'][x_1,\ldots,x_n,x_1',\ldots,x_n']\{\frac{q'-q}{[p]_{q}},\frac{x'_1-x_1}{[p]_{q}},\ldots,\frac{x_n'-x_n}{[p]_{q}}\}^\wedge_{(p,[p]_{q})}.
    \end{align*}
\end{ex}

With the notion of relatively quasiregular semiperfectoid covers we can consider slightly more complicated objects.

\begin{ex}\label{ex:logarithmic-Breuil-Kisin-cover}
    Let $K$ be a finite extension of $\Q_p$, and $\calO_K$ be the ring of integers of $K$, and choose a uniformizer $\varpi\in \calO_K$, and $k=\calO_K/\varpi$ the residue field, and finally, let $E_K(z)\in W(k)[z]$ be a minimal polynomial of $\varpi$. Consider the $\delta$-pair 
    $$
    W(k)[z,v]\xrightarrow{z\mapsto \varpi,v\mapsto 1}\calO_K,
    $$
    where the $\delta$-ring structure on $W(k)[z,v]$ is determined by $\delta(z)=\delta(v)=0$. Then by Lemma \ref{lem:relatively-qrsp-cover-for-u'-quasietale}, the $\delta$-pair $(W(k)[z,v],\calO_K)$ can be naturally regarded as a relatively quasiregular semiperfectoid cover of $\calO_K$. Therefore, by Corollary \ref{cor:coproducts-with-relatively-qrsp-covers}, the object 
    $$
    (\frakS_{\log}:=\Prism_{\calO_K/W(k)[z,v]},E_K(z)\frakS_{\log},\iota_{\calO_K})\in (\calO_K)_\Prism
    $$
    covers the final object and admits finite self-coproducts, where by Theorem \ref{thm:prismatic-cohomology-of-relative-quasiregular-semiperfectoids},  
    $$
    \frakS_{\log}=\Prism_{\calO_K/W(k)[z,v]}=W(k)[z,v]\{\frac{1-v}{E_k(z)}\}^\wedge_{(p,E_K(z))}.
    $$
    Thus the prismatic crystals on $(\calO_K)_\Prism$ can be studied as $(p,E_k(z))$-complete comodules over the Hopf algebroid $(\frakS_{\log}^0,\frakS_{\log}^1)$, where 
    $$
    \frakS_{\log}^1=\Prism_{\calO_K/W(k)[z_0,z_1,v_0,v_1]}=W(k)[z_0,z_1,v_0,v_1]\{\frac{z_1-z_0}{E_K(z_0)},\frac{v_1-v_0}{E_K(z_0)},\frac{v_0-1}{E_K(z_0)}\}^\wedge_{(p,E_K(z_0))}.
    $$

    Note that there is a natural morphism $(\frakS^0,\frakS^1)\rightarrow (\frakS_{\log}^0,\frakS_{\log}^1)$, where $(\frakS^0,\frakS^1)$ is introduced in Example \ref{ex:Breuil-Kisin-cover}. This morphism is in fact an equivalence of Hopf algebroids, in the sense that it induces an isomorphism of the corresponding formal stacks.
\end{ex}

\begin{ex}\label{ex:complete-intersection-Zpzetap}
    Assume that $p$ is an odd prime. Let $\zeta_p$ be a primitive $p$-power root of unity and consider $R=(\Z_p[\zeta_p][x_1,\ldots,x_n]/(f_1,\ldots,f_r))^\wedge_p$, where $f_1,\ldots,f_r$ give a regular sequence in $\Z_p[\zeta_p][x_1,\ldots,x_n]$. Consider the $\delta$-pair
    $$
    \Z_p[q][x_1,\ldots,x_n]\xrightarrow{q\mapsto \zeta_p,x_i\mapsto x_i}R, 
    $$
    where the $\delta$-ring structure on $\Z_p[q][x_1,\ldots,x_n]$ is determined by $\delta(q)=\delta(x_i)=0$, $1\leq i\leq n$. Then by Lemma \ref{lem:relatively-qrsp-cover-for-u'-quasietale}, the $\delta$-pair $(\Z_p[q][x_1,\ldots,x_n],R)$ can be naturally regarded as a relatively quasiregular semiperfectoid cover of $R$. Therefore, by Corollary \ref{cor:coproducts-with-relatively-qrsp-covers}, the object 
    $$
    (\qdR_{n,f_1,\ldots,f_r}:=\Prism_{R/\Z_p[q][x_1,\ldots,x_n]},[p]_q\qdR_{n,f_1,\ldots,f_r},\iota_R)\in R_\Prism
    $$
    covers the final object and admits finite self-coproducts, where by Theorem \ref{thm:prismatic-cohomology-of-relative-quasiregular-semiperfectoids}, 
    $$
    \qdR_{n,f_1,\ldots,f_r}=\Z_p[q,x_1,\ldots,x_n]\{\frac{f_1}{[p]_q},\ldots,\frac{f_r}{[p]_q}\}^\wedge_{(p,[p]_q)}.
    $$
    Thus the prismatic crystals on $R_\Prism$ can be studied as $(p,[p]_q)$-complete comodules over the Hopf algebroid $(\qdR_{n,f_1,\ldots,f_r}^0,\qdR_{n,f_1,\ldots,f_r}^1)$, where $\qdR_{n,f_1,\ldots,f_r}^1=\Prism_{R/\Z_p[q,q',x_1\ldots,x_n,x_1',\ldots,x_n']}$ can be expressed as 
    $$
    \Z_p[q,q',x_1,\ldots,x_n,x_1',\ldots,x_n']\{\frac{q'-q}{[p]_q},\frac{x_i'-x_i}{[p]_q},\frac{f_j}{[p]_q};1\leq i\leq n,1\leq j\leq r\}^\wedge_{(p,[p]_q)}.
    $$
\end{ex}

\begin{ex}\label{ex:a-cover-for-divided-power-polynomial}
    Let $R=(\Z_p[x]_{\pd})^\wedge_p$, where $\Z_p[x]_\pd$ denote the $p$-divided polynomial ring on the indeterminate $x$, that is, 
    $$
    \Z_p[x]_\pd=\Z_p[x_0=x,x_1,\ldots,x_n,\ldots]/(px_i-x_{i-1}^p;i\geq 1).
    $$
    Consider the $\delta$-pair
    $$
    \Z_p[z][x_0,x_1,\ldots]\xrightarrow{z\mapsto p,x_i\mapsto x_i} R,
    $$
    where the $\delta$-ring structure on $\Z_p[z][x_1,\ldots,x_n]$ is determined by $\delta(z)=\delta(x_i)=0$, $i\geq 0$. Then by Lemma \ref{lem:relatively-qrsp-cover-for-u'-quasietale}, the $\delta$-pair $(\Z_p[z][x_0,x_1,\ldots],R)$ can be naturally regarded as a relative quasiregular semiperfectoid cover of $R$. Therefore, by Corollary \ref{cor:coproducts-in-RPrism}, the object 
    $$
    (\frakS_{[x]_\pd}:=\Prism_{R/\Z_p[z][x_0,x_1,\ldots]},(z-p)\frakS_{[x]_\pd},\iota_R)\in R_\Prism
    $$
    covers the final object and admits finite self-coproducts, where by Theorem \ref{thm:prismatic-cohomology-of-relative-quasiregular-semiperfectoids}, 
    $$
    \frakS_{[x]_\pd}=\Z_p[z,x_0,x_1,\ldots]\{\frac{px_i-x_{i-1}^p}{z-p};i\geq 1\}^\wedge_{(p,z-p)}.
    $$
    Thus the prismatic crystals on $R_\Prism$ can be studied as $(p,z-p)$-complete comodules over the Hopf algebroid $(\frakS_{[x]_\pd}^0,\frakS_{[x]_\pd}^1)$, where 
    $$
    \frakS_{[x]_\pd}^1=\Z_p[z,z',x_i,x_i';i\geq 0]\{\frac{z'-z}{z-p},\frac{x_i'-x_i}{z-p},\frac{px_j-x_{j-1}^p}{z-p};i\geq 0,j\geq 1\}^\wedge_{(p,z-p)}.
    $$
\end{ex}

\end{document}